\numberwithin{equation}{section}
\title{Hilbert spaces admit no finitary discrete imaginaries}
\author{Ruiyuan Chen and Isabel Trindade}
\date{}
\begin{document}

\maketitle

\begin{abstract}
We prove that every functor from the category of Hilbert spaces and linear isometric embeddings to the category of sets which preserves directed colimits must be essentially constant on all infinite-dimensional spaces.
In other words, every finitary set-valued imaginary over the theory of Hilbert spaces, in a broad signature-independent sense, must be essentially trivial.
This extends a result and answers a question by Lieberman--Rosický--Vasey, who showed that no such functor on the supercategory of Hilbert spaces and injective linear contractions can be faithful.
\let\thefootnote=\relax
\footnotetext{2020 \emph{Mathematics Subject Classification}:
    Primary 18C35,
    Secondary 03C66.
}
\footnotetext{\emph{Key words and phrases}:
    Hilbert space,
    imaginary sort,
    finitary functor,
    continuous logic.
}
\end{abstract}

\section{Introduction}

Traditional model theory formulated for first-order logic is well-suited to studying properties of finitary mathematical structures, such as graphs, linear orders, groups, rings, Boolean algebras, etc.
It has long been known that natural classes of structures that arise in analysis and topology, such as Banach spaces, operator algebras, measure-preserving group actions, etc., cannot be adequately described as first-order structures.
Intuitively speaking, this is due to the infinitary or ``continuous'' aspects of the structure of interest (e.g., information about convergent sequences or $L^\infty$ functions).
The \emph{continuous first-order logic} developed in the last few decades (see \cite{BBHU:ctslog}) has been highly successful at adapting model-theoretic ideas to studying such analytically-flavored structures.

In trying to formalize the intuition that structures such as Banach spaces and $C^*$-algebras are ``intrinsically continuous'', hence cannot be described in discrete first-order logic, a subtlety arises: such a result can only be meaningful if formulated in a signature-independent sense.
For example, $C^*$-algebras possess a robust model theory in both continuous first-order logic (see \cite{FHLRTVW:cstar}) and infinitary equational logic (see \cite{manes:algthy}), but only by taking the ``underlying set'' to be the unit ball, rather than the underlying vector space.
\emph{A priori} it is conceivable that yet a different choice of underlying set and signature could even yield a discrete first-order axiomatization.

Given an abstract category $\!C$, one can regard an arbitrary functor $U : \!C -> \Set$ to the category of sets as a possible assignment of ``underlying set'' to the objects in $\!C$.
Note that if $\!C$ in fact consists of some concretely defined structures, then by general principles,
\unskip\footnote{By the Yoneda lemma \cite[3.10]{kelly:enriched}, $U$ is a colimit, i.e., quotient of coproduct, of representables $\!C(K,-) \subseteq (-)^K$.}
every such $U$ admits a syntactic description: $U$ maps each structure $A \in \!C$ to a definable quotient of a disjoint union of definable sets of (possibly infinite) tuples.
Such a syntactic specification of a uniform assignment of a set to each structure is called an \defn{imaginary sort}.
When the imaginary sort is finitary (defined in first-order logic, and involving only finite tuples and disjoint unions), then the resulting functor $U : \!C -> \Set$ will preserve directed colimits.
This also holds more generally for imaginaries in the $\Sigma_1$ fragment of the infinitary logic $\@L_{\infty\omega}$, also known (in relation to topos theory) as \emph{geometric logic}.
See \cite[\S4.3]{hodges}, \cite{makkai-reyes}, \cite{johnstone:elephant} for background on imaginaries in finitary and infinitary logic.

\addpenalty{-300}
In \cite{LRV}, it was shown that

\begin{theorem}[Lieberman--Rosický--Vasey]
\label{thm:lrv}
There is no faithful directed-colimit-preserving functor $U$ from the category $\Hilbm$ of Hilbert spaces and injective linear contractions to the category of sets.
\end{theorem}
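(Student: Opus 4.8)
The plan is to assume such a faithful directed-colimit-preserving $U$ exists and to derive a contradiction by showing that $U$ must collapse all endomorphisms of $\ell^2$. Write $\iota_A\colon A\hookrightarrow H$ for an isometric embedding and $V_A:=\operatorname{im}(U(\iota_A))\subseteq U(H)$; the argument uses only isometric embeddings, unitaries, and the maps out of the zero space, all of which are injective contractions. The structural input is that $\ell^2$ is a directed colimit of its finite-dimensional subspaces in more than one way: as the (large) directed system of \emph{all} of them, but also as the countable chain $F_n=\langle e_1,\dots,e_n\rangle$ for an orthonormal basis $(e_n)$, whose union is merely dense. (One must be slightly careful about which directed colimits exist in $\Hilbm$; using isometric embeddings, where the colimit of a chain is the closure of the union, is the convenient choice.) Since $U$ preserves directed colimits and colimit cocones in $\Set$ are jointly surjective, $U(\ell^2)=\bigcup_n V_{F_n}$, while simultaneously $V_G\subseteq U(\ell^2)$ for every finite-dimensional subspace $G$. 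This is exactly where the ``intrinsic continuity'' of Hilbert space --- invisible to a finitary functor --- will be exploited: a unit vector $w$ of full support lies in the completion but in no $F_n$, so $\langle w\rangle\cap F_n=0$ for every $n$.

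First I would prove that for such a ``generic'' $w$ one has $V_{\langle w\rangle}\subseteq Z_0$, where $Z_0:=\operatorname{im}(U(0\to\ell^2))$. Given $a\in U(\langle w\rangle)$, finitariness places $U(\iota_{\langle w\rangle})(a)$ in some $V_{F_n}$, and this identification already takes place inside the finite-dimensional space $G=\langle w\rangle\oplus F_n$: there is $b\in U(F_n)$ with $U(\iota_{\langle w\rangle\to G})(a)=U(\iota_{F_n\to G})(b)$. One then argues that an element of $U(G)$ lying over both $\langle w\rangle$ and $F_n$ --- two subspaces meeting only in $0$ --- must come from the zero space, giving $U(\iota_{\langle w\rangle})(a)\in Z_0$. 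Homogeneity now propagates this: picking a unitary $u$ of $\ell^2$ with $u(e_1)=w$, the map $U(u)$ is a bijection that fixes $Z_0$ pointwise (because $0$ is initial) and carries $V_{\langle e_1\rangle}$ onto $V_{\langle w\rangle}\subseteq Z_0$, so $V_{\langle e_1\rangle}\subseteq Z_0$; the reverse inclusion is trivial, so $V_{\langle e_1\rangle}=Z_0$. The same argument with a ``generic'' $n$-dimensional subspace in place of $\langle w\rangle$ gives $V_{F_n}=Z_0$ for all $n$, and then $V_G=Z_0$ for every finite-dimensional $G$ by one further use of homogeneity.

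Combining with $U(\ell^2)=\bigcup_G V_G$ yields $U(\ell^2)=Z_0=\operatorname{im}(U(0\to\ell^2))$. Since $0$ is initial in $\Hilbm$, for any $f\colon\ell^2\to\ell^2$ and any $z$ we have $f\circ(0\to\ell^2)=(0\to\ell^2)$, hence $U(f)\bigl(U(0\to\ell^2)(z)\bigr)=U(0\to\ell^2)(z)$, independent of $f$. As $U(0\to\ell^2)$ is onto $U(\ell^2)$, this forces $U(f)=U(g)$ for all $f,g\colon\ell^2\to\ell^2$ --- in particular $U$ cannot distinguish $\operatorname{id}_{\ell^2}$ from a nontrivial unitary --- contradicting faithfulness.

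The crux is the middle step: showing that an element of $U(\langle w\rangle\oplus F_n)$ lying over both $\langle w\rangle$ and $F_n$ descends to the zero space. A priori $U$ need not preserve pullbacks of finite-dimensional subspaces, so one must either exploit the geometry --- the rotations of $\ell^2$ fixing $F_n$ pointwise move $\langle w\rangle$ through a whole family of complements meeting pairwise only in $0$, and one shows the only elements supported over all of them come from $0$ --- or first replace $U$ by a suitably saturated faithful directed-colimit-preserving functor with controlled supports. Everything else in the argument is soft: joint surjectivity of directed-colimit cocones, initiality of the zero object, and the homogeneity of Hilbert space.
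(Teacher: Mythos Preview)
Your proposal has the right overall shape but contains a genuine error and leaves the main technical step unaddressed.

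The error is the claim that an element $c$ of $U(G)$ (or of $U(\ell^2)$) lying in the images of both $U(\iota_{\langle w\rangle})$ and $U(\iota_{F_n})$ must lie in $Z_0=\operatorname{im}(U(0\to\ell^2))$. This is false: take $U$ sending $0$ to a singleton $\{p\}$ and every nonzero space to $\{p,q\}$, with every morphism between nonzero spaces sent to the identity; then $q$ lies over every nonzero subspace but not over $0$. (This $U$ is not faithful, but nothing in your argument for ``$c\in Z_0$'' invokes faithfulness.) The correct and sufficient conclusion is the weaker one that $c$ has \emph{trivial support} in the sense of \cref{def:supp}: $Uf(c)=Ug(c)$ for all $f,g\colon\ell^2\rightrightarrows B$. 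Once every element of $U(\ell^2)$ has trivial support, $Uf=Ug$ for all parallel $f,g$ out of $\ell^2$ and faithfulness fails directly; there is no need to show $U(\ell^2)=Z_0$. Incidentally, you need not descend to a finite-dimensional $G$ at all: work in $\ell^2$, where $c$ is supported on $\langle w\rangle$ and on $F_n$ simply because it lies in $V_{\langle w\rangle}\cap V_{F_n}$.

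The gap is your ``crux'': that being supported on $\langle w\rangle$ and on $F_n$ forces being supported on $\langle w\rangle\cap F_n=0$. This is exactly the supports-intersect lemma (\cref{thm:hilbr-supp-int} here for $\Hilbr$; \cite[Lemma~14]{LRV} for $\Hilbm$), and it is the main technical content of the whole argument, proved via a careful analysis of how isometries fixing one subspace can be composed to realize any isometry fixing the intersection. Your geometric sketch points in the right direction but, as phrased, again targets the false ``comes from $0$'' conclusion rather than ``supported on $0$''.

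On the comparison: the paper (following \cite{LRV}) also hinges on supports-intersect, but then finishes via a cardinality argument --- passing to $\ell^2(\lambda)$ for $\lambda>2^{\aleph_0}$ of countable cofinality and counting $n$-dimensional subspaces against $\lvert U(\ell^2(\lambda))\rvert$ (\cref{Uconstant-lambda}). Your generic-subspace idea --- choosing, for each $n$, an $n$-dimensional $W$ with $W\cap F_m=0$ for all $m$, so that every element of $V_W$ automatically acquires trivial support, then transporting by a unitary to $V_{F_n}$ --- is a genuinely different route that, once corrected as above and once supports-intersect is actually established, would bypass the cardinality argument and work already in $\ell^2$.
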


In light of the discussion above, this means that there is no discrete first-order (or even geometric) axiomatization of the category $\Hilbm$, even allowing for unconventional choices of ``underlying set'', thereby showing that in a strong sense, Hilbert spaces are ``intrinsically continuous'' structures.
Using this result, the authors also showed how to easily deduce that the categories of complete metric spaces, Banach spaces, and (commutative) $C^*$-algebras, with corresponding injective $1$-Lipschitz homomorphisms, also do not admit faithful directed-colimit-preserving functors to $\Set$.

The main result of this paper is a significant strengthening of \cref{thm:lrv}:

\begin{theorem}[\cref{thm:hilbr-Uconstant}]
\label{thm:intro-hilbr-Uconstant}
Every directed-colimit-preserving functor $U$ from the category $\Hilbr$ of Hilbert spaces and linear isometric embeddings to the category of sets is essentially constant on all infinite-dimensional spaces.
\end{theorem}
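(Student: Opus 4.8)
The plan is to show that $U$ sends every isometric embedding between infinite-dimensional spaces to a bijection; that $U$ is essentially constant on such spaces will then follow by a short naturality argument. Throughout I use that every Hilbert space $H$ is the directed colimit in $\Hilbr$ of its finite-dimensional subspaces (their union being dense), so preservation of directed colimits gives $U(H)=\varinjlim_{F}U(F)$ over the directed poset of finite-dimensional $F\subseteq H$, with the structure maps $U(\iota_F)\colon U(F)\to U(H)$ (where $\iota_F\colon F\hookrightarrow H$) jointly surjective; and since a coordinate flag is already dense, every element of $U(H)$ lies in the image of $U(\iota_F)$ for some finite-dimensional, indeed ``coordinate'', subspace $F$. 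As an isometric embedding has closed range, it factors as an isomorphism onto its range followed by an inclusion of a closed subspace, so it suffices to prove: for every inclusion $\iota\colon H_1\hookrightarrow H$ of closed infinite-dimensional subspaces, $U(\iota)$ is a bijection. Granting this, fixing $\ell^2$ together with an embedding $j_H\colon\ell^2\hookrightarrow H$ for each infinite-dimensional $H$ gives bijections $\alpha_H:=U(j_H)$, and naturality of $(\alpha_H)$ for a morphism $f\colon H\to K$ amounts to the equality $U(f\circ j_H)=U(j_K)$ of $U$ applied to two embeddings $\ell^2\to K$, which follows from the Key Lemma below.

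\emph{Injectivity.} If $\dim H=\infty$, then $U(f)$ is injective for every embedding $f\colon H\hookrightarrow K$. Given $U(f)(x)=U(f)(y)$, filteredness of $U(H)=\varinjlim_F U(F)$ lets me write $x=U(\iota_F)(x_0)$, $y=U(\iota_F)(y_0)$ for a common finite-dimensional $F\subseteq H$; pushing into $K$ and using filteredness of $U(K)=\varinjlim_G U(G)$ produces a finite-dimensional $G\subseteq K$ containing $f(F)$ and an isometric embedding $\chi\colon F\to G$ with $U(\chi)(x_0)=U(\chi)(y_0)$. Because $\dim H=\infty$ there is an isometric embedding $\psi\colon G\hookrightarrow H$ with $\psi\circ\chi=\iota_F$: let $\psi$ undo $\chi$ on $\chi(F)$, and extend it isometrically over the finitely many remaining dimensions of $G$ into the infinite-dimensional space $H\ominus F$. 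Applying $U(\psi)$ gives $x=U(\iota_F)(x_0)=U(\iota_F)(y_0)=y$.

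\emph{Surjectivity, reduced to a Key Lemma.} The remaining content is:

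\textsc{Key Lemma.} \emph{If $\dim H=\infty$ and $F$ is finite-dimensional, then $U(\alpha)=U(\beta)$ for any two isometric embeddings $\alpha,\beta\colon F\to H$; equivalently, $U$ cannot detect how a finite-dimensional subspace sits inside an infinite-dimensional one.}

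Granting it, surjectivity of $U(\iota)\colon U(H_1)\to U(H)$ for a closed infinite-dimensional $H_1\subseteq H$ is immediate: any $z\in U(H)$ equals $U(\iota_F)(y)$ for some finite-dimensional $F\subseteq H$, and choosing any embedding $\phi\colon F\to H_1$ (possible as $\dim H_1=\infty$) gives $z=U(\iota_F)(y)=U(\iota\circ\phi)(y)\in\operatorname{im}U(\iota)$. Applied inside a target $K$ and transported through a coordinate flag of $\ell^2$, the lemma also yields the naturality used above.

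\emph{Attacking the Key Lemma; the main obstacle.} The tools on hand are functoriality of $U$; homogeneity of $\Hilbr$ — any two embeddings $\alpha,\beta\colon F\to H$ are related by an automorphism $u$ of $H$ with $u\alpha=\beta$, since both orthogonal complements are infinite-dimensional; and the completed colimit $H=\varinjlim_F F$. The heart of the difficulty — and where the ``intrinsic continuity'' of Hilbert spaces enters — is that the first two tools only give $U(\beta)=U(u)\circ U(\alpha)$, and forcing $u$ to additionally fix the relevant elements of $U(H)$ would force $\alpha=\beta$; so one must genuinely use the completed colimit, yet that colimit is readily exploited only ``forwards'' (one can push a finite configuration deeper into $H$, never pull it back onto a prescribed subspace). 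I would try to break this circularity by inducting on $\dim F$: the case $\dim F=0$ is trivial; writing $F=F'\oplus\mathbb{C}e$, the inductive hypothesis makes $U(\alpha)$ and $U(\beta)$ agree on the image of $U(F'\hookrightarrow F)$, so only the finitely many ``new'' elements of $U(F)$ need attention, and for these I would re-represent the corresponding element of $U(H)$ on a coordinate subspace and then use a \emph{pure} isometry — e.g.\ the unilateral shift on $\ell^2$, whose iterated ranges intersect trivially — to transfer that representation onto a strictly lower-dimensional subspace, reducing to the inductive hypothesis. I expect this transfer step — arranging that the re-representation truly lowers the induction parameter rather than merely restating the lemma — to be the hard technical core of the argument, and to be exactly the place where the existence of non-surjective isometries with trivially intersecting ranges, a phenomenon impossible in a discrete category, gets used.
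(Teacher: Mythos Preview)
Your overall architecture is sound and matches the paper's: the injectivity argument is essentially the paper's Lemma~3.9, and once your Key Lemma is established, the surjectivity and naturality arguments you sketch go through exactly as in the paper's Theorem~3.10. The Key Lemma is indeed equivalent to the assertion that every element of $U(H)$ has trivial support in the paper's sense, and correctly isolating it as the crux is the right diagnosis.

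However, you have not proved the Key Lemma --- you only propose an inductive strategy and flag the ``transfer step'' as the hard part, and as stated that step has no clear implementation. The difficulty is real: knowing that $U(\alpha)$ and $U(\beta)$ agree on the image of $U(F'\hookrightarrow F)$ gives you nothing about elements of $U(F)$ not in that image, and re-representing $U(\alpha)(x)\in U(H)$ on some other finite-dimensional $G$ via filteredness does not lower any obvious induction parameter (there is no reason $\dim G<\dim F$). The unilateral shift gives a non-surjective isometry with trivially intersecting iterated ranges, but from that one only gets that $U(S)$ is injective; you would need $U(S)$ to act as the identity on the relevant elements, and nothing in your toolkit forces that. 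I do not see how to close this circle by purely ``soft'' means.

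The paper's route to the Key Lemma is quite different and supplies the missing idea. First one defines, for $x\in U(A)$, the notion of being \emph{supported} on a subspace $A_0\subseteq A$ (meaning $U(f)(x)=U(g)(x)$ whenever $f,g$ agree on $A_0$), and proves that supports \emph{intersect}: if $x$ is supported on finite-dimensional $A_0$ and on $A_1$, then on $A_0\cap A_1$. This is the technical core, reduced to an $\mathbb F^3$ lemma showing that any unit vector can be moved to any other by a finite composite of isometries each fixing one of two given non-parallel unit vectors. This yields a well-defined minimal finite-dimensional support $\operatorname{supp}_A(x)$. Second --- and this is what your sketch entirely lacks --- one uses a \emph{cardinality} argument: choose $\lambda>2^{\aleph_0}$ of countable cofinality with $|U(\ell^2(\lambda))|\le\lambda$; if some $x$ had nontrivial $n$-dimensional support, transitivity of the unitary group on $n$-dimensional subspaces would produce $\lambda^{\aleph_0}>\lambda$ distinct elements of $U(\ell^2(\lambda))$, contradicting K\H{o}nig's theorem. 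Hence all supports are trivial at dimension~$\lambda$, and the conclusion then propagates down to all infinite-dimensional spaces via your injectivity argument and the $-\oplus\ell^2(\lambda)$ square. So the genuine gap in your proposal is precisely this pair of ingredients: the supports-intersect lemma and the counting argument at a large cardinal of countable cofinality.
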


A functor is \defn{essentially constant} if it is naturally isomorphic to a constant functor.
In syntactic terms, whereas \cref{thm:lrv} says that no finitary discrete-set-valued imaginary can capture all of the structure on a Hilbert space, \cref{thm:intro-hilbr-Uconstant} says that no such imaginary can capture any structure whatsoever on infinite-dimensional spaces; it must be a constant disjoint union of singletons.
We stress however that, like \cref{thm:lrv}, \cref{thm:intro-hilbr-Uconstant} really concerns $\Hilbr$ as an \emph{abstract} category, and is independent of any prior representation of Hilbert spaces as concrete structures.

Note also that \cref{thm:intro-hilbr-Uconstant} concerns the category $\Hilbr$ of Hilbert spaces with model-theoretic embeddings as morphisms (which are elementary embeddings in continuous logic between infinite-dimensional spaces \cite[\S15]{BBHU:ctslog}); whereas \cref{thm:lrv} concerns the supercategory $\Hilbm$ of Hilbert spaces with injective homomorphisms, which itself lacks all directed colimits.
\unskip\footnote{This is contrary to a claim in \cite[\S3, first paragraph]{LRV}; indeed, note that the colimit of the sequence $\#C -> \#C -> \#C -> \dotsb$ where each morphism scales by $\frac12$ does not exist in $\Hilbm$.
The proof in \cite{LRV} nonetheless works to show that there is no faithful functor $U : \Hilbm -> \Set$ preserving all \emph{existing} directed colimits.}
Our proof of \cref{thm:intro-hilbr-Uconstant} is broadly based on the ideas in the proof of \cref{thm:lrv} from \cite{LRV}; the main new technical difficulties all arise from replacing $\Hilbm$ with $\Hilbr$.
In particular, we obtain the analogue of \cref{thm:lrv} for $\Hilbr$, which answers a question from \cite[Remark~20]{LRV}.

It is worth remarking that \cref{thm:intro-hilbr-Uconstant} continues to hold if the functor $U$ is defined only on the full subcategory $\Hilbr^{\ge\aleph_0} \subseteq \Hilbr$ of infinite-dimensional spaces to begin with; see \cref{thm:hilbrk-Uconstant}.

In \cref{sec:misc}, we also use \cref{thm:intro-hilbr-Uconstant} to deduce easy consequences for some related categories:

\begin{corollary}[\cref{thm:hilb-Uconstant}]
Every directed-colimit-preserving functor from the category $\Hilb$ of Hilbert spaces and all linear contractions to the category of sets is essentially constant.
\end{corollary}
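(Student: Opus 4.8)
The plan is to leverage \cref{thm:hilbr-Uconstant} by using the non-isometric morphisms available in $\Hilb$ to propagate triviality from the infinite-dimensional isometric-embedding part of the category to everything else. Fix a directed-colimit-preserving functor $U\colon\Hilb\to\Set$. First I would restrict $U$ to $\Hilbr$. A directed diagram of Hilbert spaces and isometric embeddings has as its colimit in $\Hilbr$ the completion of the (directed) union of the images, and this is also its colimit in $\Hilb$: any compatible cocone of contractions into some $K$ is determined on the dense union and has norm $\le 1$ there, hence extends uniquely to a contraction from the completion. So $U|_{\Hilbr}$ preserves directed colimits, and \cref{thm:hilbr-Uconstant} (or \cref{thm:hilbrk-Uconstant}) supplies a set $S$ together with bijections $\eta_H\colon U(H)\to S$, one for each infinite-dimensional $H$, satisfying $\eta_K\circ U(e)=\eta_H$ for every isometric embedding $e\colon H\to K$ of infinite-dimensional spaces. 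The remaining task is to extend $\eta$ to a natural isomorphism $U\Rightarrow\Delta_S$ over all of $\Hilb$, where $\Delta_S$ denotes the constant functor at $S$.

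Next I would treat an arbitrary contraction $f\colon H\to K$ between infinite-dimensional spaces. Writing $D_f=(1-f^*f)^{1/2}$ for the defect operator on $H$, the map $e\colon H\to H\oplus K$, $h\mapsto(D_fh,fh)$, is an isometric embedding, and $f=q\circ e$ where $q\colon H\oplus K\to K$ is the second-coordinate projection. The key point is that $q$ admits the isometric embedding $\iota_K\colon K\to H\oplus K$, $k\mapsto(0,k)$, as a section, so that $q\circ\iota_K=\mathrm{id}_K$; then naturality of $\eta$ at $\iota_K$ forces $U(\iota_K)=\eta_{H\oplus K}^{-1}\circ\eta_K$ to be a bijection, hence $U(q)=U(\iota_K)^{-1}=\eta_K^{-1}\circ\eta_{H\oplus K}$ and therefore $\eta_K\circ U(q)=\eta_{H\oplus K}$; composing this with naturality at the isometric embedding $e$ yields $\eta_K\circ U(f)=\eta_H$. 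Thus $\eta$ is in fact a natural isomorphism from $U$ to $\Delta_S$ already on the full subcategory of $\Hilb$ spanned by the infinite-dimensional spaces; in particular $U$ sends every contraction of an infinite-dimensional space to itself to the identity.

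Finally I would bring in the finite-dimensional (and zero) spaces. Fix an infinite-dimensional Hilbert space $\ell^2$ and, for each $A$, set $\widehat A:=A\oplus\ell^2$ with coordinate embedding $\iota_A\colon A\to\widehat A$ and coordinate projection $p_A\colon\widehat A\to A$. Since $p_A\circ\iota_A=\mathrm{id}_A$ and $\iota_A\circ p_A$ is a contraction of the infinite-dimensional space $\widehat A$ to itself, the previous step gives $U(\iota_A)\circ U(p_A)=\mathrm{id}$, so $U(\iota_A)$ is a bijection; define $\eta_A:=\eta_{\widehat A}\circ U(\iota_A)\colon U(A)\to S$, which agrees with the earlier $\eta_A$ when $A$ is infinite-dimensional. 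For naturality at an arbitrary contraction $g\colon A\to B$, I would compare with $g\oplus\mathrm{id}_{\ell^2}\colon\widehat A\to\widehat B$, using $\iota_B\circ g=(g\oplus\mathrm{id}_{\ell^2})\circ\iota_A$ and naturality of $\eta$ on the infinite-dimensional part, to get $\eta_B\circ U(g)=\eta_A$. Hence $\eta\colon U\Rightarrow\Delta_S$ is a natural isomorphism and $U$ is essentially constant. The only ingredient that is not pure diagram-chasing — and so the step I expect to be the main obstacle — is the factorization of a general contraction as an isometric embedding followed by a projection with an isometric section; once that and \cref{thm:hilbr-Uconstant} are in hand, the rest is bookkeeping.
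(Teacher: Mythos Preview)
Your argument is correct, but it takes a rather different route from the paper's. The paper exploits the zero object of $\Hilb$ directly: it exhibits specific isometric embeddings $j,k\colon A\oplus\ell^2\rightrightarrows A\oplus A\oplus\ell^2$ and maps $i,p$ with $pji=0_A$ and $pki=\id_A$, invokes \cref{thm:hilbr-Uconstant} once to get $Uj=Uk$, and concludes $U(0_A)=\id_{UA}$ for every $A$; then whiskering the zero natural transformations $\id_{\Hilb}\Rightarrow 0\Rightarrow\id_{\Hilb}$ by $U$ immediately gives a natural isomorphism $U\cong U(0)$. Your approach instead builds the natural isomorphism by hand: you extend $\eta$ from isometric embeddings to all contractions between infinite-dimensional spaces via the defect-operator factorization $f=q\circ e$ (an isometry followed by a coordinate projection with isometric section), and then stabilize by $\ell^2$ to reach finite-dimensional spaces. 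The paper's argument is shorter and leans on the pointed structure of $\Hilb$; yours is more explicit and would adapt more readily to categories where a zero object is unavailable but split projections and an isometry/projection factorization still exist. Your remark that directed colimits in $\Hilbr$ remain colimits in $\Hilb$ is a point the paper leaves implicit when it applies \cref{thm:hilbr-Uconstant} to the restriction of $U$.
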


\begin{corollary}[\cref{thm:hilbm-Uconstant}]
Every directed-colimit-preserving functor $U : \Hilbm -> \Set$ is essentially constant on all infinite-dimensional spaces.
\end{corollary}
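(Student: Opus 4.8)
The plan is to reduce to the main theorem in the form \cref{thm:hilbrk-Uconstant}, which asserts that a directed-colimit-preserving functor on the full subcategory $\Hilbr^{\ge\aleph_0}$ of infinite-dimensional spaces is essentially constant. So suppose $U : \Hilbm \to \Set$ preserves directed colimits, and let $U_0$ denote its restriction to the wide subcategory $\Hilbr^{\ge\aleph_0}$ whose morphisms are the isometric embeddings between infinite-dimensional spaces.

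The heart of the matter — and the step I expect to be the main obstacle — is to verify that $U_0$ again preserves directed colimits. This is \emph{not} a formality: $\Hilbm$ itself lacks the colimits in question, so colimit-preservation cannot simply be transported along the inclusion $\Hilbr \hookrightarrow \Hilbm$. Indeed, for a directed system $(H_i)$ of isometric embeddings the colimit in $\Hilbr^{\ge\aleph_0}$ is the completed union $H = \overline{\bigcup_i H_i}$, but a compatible cocone of injective contractions $H_i \to K$ glues only to a (possibly non-injective) contraction $H \to K$, so $H$ is not a colimit of $(H_i)$ in $\Hilbm$ (compare the scaling example in the footnote above). One must therefore reach $H$ through directed systems that \emph{do} have colimits in $\Hilbm$, namely ``contracting'' systems whose colimit legs are surjective. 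A natural attempt, after replacing $(H_i)$ by a cofinal chain $H_0 \hookrightarrow H_1 \hookrightarrow \cdots$ dense in $H$: with $P_n$ the orthogonal projection of $H$ onto $H_n$ and $\delta_n > 0$ summable, put $t_n := P_n + (1-\delta_n)(1 - P_n)$; the $t_n$ are commuting injective contractions whose tail composites $\prod_{j \ge n} t_j$ converge in operator norm to invertible positive contractions $\iota_n \to 1$ restricting to the identity on $H_n$, and the constant system on $H$ with transition maps $t_n$ has colimit $(H, (\iota_n))$ in $\Hilbm$. The inclusions $H_n \hookrightarrow H$ form a morphism of directed systems into this auxiliary one, and one would then like to combine $U$'s preservation of the auxiliary colimit with the identities $\iota_n|_{H_n} = \mathrm{id}$ to conclude that $U(H) = \varinjlim_i U(H_i)$ along the maps $U(H_i \hookrightarrow H)$. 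Finding the right such auxiliary construction (the naive one above remembers the directions orthogonal to the $H_n$ rather than discarding them, so it likely needs refinement) is where I expect the real work; the general, non-chain index case should then reduce to the countable case by cofinality.

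Granting that $U_0$ preserves directed colimits, \cref{thm:hilbrk-Uconstant} yields a set $S$ and a natural isomorphism $\alpha : U_0 \Rightarrow \Delta_S$; in particular $U$ sends every isometric embedding of infinite-dimensional spaces to a bijection and every isometric automorphism to the identity. It remains to see that $\alpha$ is natural for \emph{all} morphisms of $\Hilbm^{\ge\aleph_0}$, i.e.\ that $\alpha_{H'} \circ U(f) = \alpha_H$ for an arbitrary injective contraction $f : H \to H'$. Writing the polar decomposition $f = v\,|f|$, the factor $v : H \to H'$ is an isometric embedding — $f$ injective forces $v$ to be isometric on $\overline{\mathrm{ran}\,|f|} = H$ — while $|f| = (f^*f)^{1/2}$ is a positive injective contraction of $H$; naturality of $\alpha$ for $v$ reduces the claim to showing $U(|f|) = \mathrm{id}_{U(H)}$. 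I would obtain this by one further directed-colimit argument of the same perturbative flavour: the deformation $T_s := (1-s)\,\mathrm{id}_H + s\,|f|$ for $0 \le s < 1$, with transition maps $T_{s'}T_s^{-1}$ for $s \le s'$, is a directed system in $\Hilbm$ which has a colimit (first when $|f|$ is invertible, then approximating a general positive injective contraction by invertible ones), and running this through $U$ together with the essential constancy of $U_0$ already established pins $U(|f|)$ down to the identity. Assembling the pieces gives the natural isomorphism $U \cong \Delta_S$ on $\Hilbm^{\ge\aleph_0}$, which is exactly essential constancy on the infinite-dimensional spaces.
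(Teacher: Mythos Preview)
Your approach is genuinely different from the paper's, and the gap you yourself flag in step~2 is real and not easily repaired. The directed colimit of an isometric chain $H_0 \hookrightarrow H_1 \hookrightarrow \cdots$ in $\Hilbr$ is essentially never a colimit in $\Hilbm$: with $H_n$ the span of $e_1,\dotsc,e_n$ in $\ell^2$ and any $a \in \ell^2$ having all coordinates nonzero, the restrictions $(1 - P_a)|_{H_n}$ form a compatible cocone of injective contractions whose unique contractive extension $1 - P_a$ kills $a$. So the hypothesis on $U$ says nothing directly about such systems, and your auxiliary ``contracting'' system on the constant object $H$ remembers all of $H$ at every stage rather than only $H_n$, so it is not clear how preservation of its colimit could force $U(H) \cong \varinjlim U(H_n)$; as you anticipate, a genuine new idea is needed here, and none is supplied. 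Step~4 has the same flavour of difficulty: for non-invertible $|f|$ the deformation system $(T_s)_{s<1}$ is precisely the kind of system whose $\Hilbm$-colimit is in doubt (the scaling example in the footnote is essentially the case $|f| = \tfrac12$ on $\#C$).

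The paper sidesteps the reduction to $\Hilbr$ entirely. Since \cite{LRV} already proved the support-intersection lemma in $\Hilbm$ (their Lemma~14 yields a well-defined finite-dimensional $\supp_A(x)$ for each $x \in UA$ with $A$ infinite-dimensional), one simply reruns the arguments of \cref{Uconstant-lambda}, \cref{Uconstant}, \cref{injection}, and \cref{thm:hilbr-Uconstant} inside $\Hilbm$: the counting argument produces a cardinal $\lambda$ at which $U$ collapses all parallel pairs in $\Hilbm$, \cref{injection} still applies to the isometric inclusion $B \hookrightarrow B \oplus \ell^2(\lambda)$ appearing in the final square, and the rest goes through verbatim. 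In short, rather than transporting colimit-preservation along $\Hilbr \hookrightarrow \Hilbm$, one transports the support machinery in the opposite direction --- which is free, because it was established for $\Hilbm$ to begin with.
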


\begin{corollary}[\cref{thm:metr-banr}]
There is no faithful directed-colimit-preserving functor $U$ from the category $\Metr$ of complete metric spaces (of diameter $\le 1$) and isometric embeddings, or from the category $\Banr$ of Banach spaces and linear isometric embeddings, to the category of sets.
\end{corollary}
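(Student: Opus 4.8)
The plan is to deduce \cref{thm:metr-banr} from \cref{thm:intro-hilbr-Uconstant} by precomposing any hypothetical faithful directed-colimit-preserving functor $U : \Metr -> \Set$ (resp.\ $U : \Banr -> \Set$) with a faithful directed-colimit-preserving functor out of $\Hilbr$. For $\Banr$, the required functor is the inclusion $I : \Hilbr -> \Banr$ that forgets the inner product and retains only the underlying Banach space; it is faithful --- in fact full, since a linear isometric embedding between Hilbert spaces automatically preserves the inner product by the polarization identity --- and it preserves directed colimits, since in both categories the directed colimit of a system of linear isometric embeddings is the completion of the directed union, with the same colimit cocone. For $\Metr$, I would compose $I$ with the closed-ball functor $B : \Banr -> \Metr$ sending a Banach space $V$ to its closed ball $B(V)$ of radius $\tfrac12$ --- a complete metric space of diameter $\le 1$ --- and sending a linear isometric embedding to its restriction; this $B$ is faithful because two linear maps that agree on $B(V)$ agree on all of $V$ by homogeneity.

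The one step that requires genuine verification is that $B$ preserves directed colimits. Write a directed colimit $V$ of a system $(V_i)$ of linear isometric embeddings as the completion of $\bigcup_i V_i$. One checks that $B(V)$ is then the completion of $\bigcup_i B(V_i)$: given $x \in V$ with $\|x\| \le \tfrac12$, choose $x_n \in \bigcup_i V_i$ with $x_n -> x$ and replace each $x_n$ by $x_n \cdot \min(1, \tfrac12/\|x_n\|)$, which lies in $\bigcup_i B(V_i)$ and still converges to $x$. Since the directed colimit in $\Metr$ of a system of isometric embeddings is likewise the completion of the directed union, this identifies $B(V)$ with the directed colimit of the $B(V_i)$ in $\Metr$, with matching cocones. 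This is the only place where the specific structure of the categories involved enters; everything else is formal, so I do not expect a serious obstacle beyond this routine check.

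Finally, composing produces a faithful directed-colimit-preserving functor $\Hilbr -> \Set$ --- namely $U \circ I$ in the $\Banr$ case and $U \circ B \circ I$ in the $\Metr$ case. By \cref{thm:intro-hilbr-Uconstant} this functor is essentially constant on infinite-dimensional spaces, hence (being naturally isomorphic to a constant functor there) sends any two parallel morphisms between infinite-dimensional Hilbert spaces to one and the same map of sets. But there exist distinct linear isometric embeddings between infinite-dimensional Hilbert spaces --- for instance the identity and the unilateral shift on $\ell^2$ --- and by faithfulness of the composite these would have to be sent to distinct maps of sets. This contradiction shows that no such $U$ exists, proving \cref{thm:metr-banr}.
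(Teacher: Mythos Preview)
Your proposal is correct and follows essentially the same approach as the paper: the paper's proof simply invokes ``the faithful forgetful functors $\Hilbr \to \Banr \to \Metr$'' together with \cref{thm:hilbr-Uconstant}, and you have spelled out precisely what these functors are (the inclusion, and the closed-ball functor), verified their directed-colimit preservation, and made explicit the contradiction with faithfulness. The paper is terser, but the underlying argument is the same.
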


\paragraph*{Acknowledgments}

The authors were supported by NSF grant DMS-2224709.

\section{Preliminaries}
\label{sec:prelim}

We assume familiarity with basic category theory, including the notions of functor, natural transformation, monomorphism, and colimit; see \cite{maclane:cats}.
A \defn{concrete category} means a category $\!C$ equipped with an arbitrary functor $U : \!C -> \Set$.
A \defn{subobject} of an object $A$ in a category $\!C$ is an equivalence class of monomorphisms $i : A_0 -> A$ into $A$ (two such monomorphisms being equivalent iff they fit into a commutative triangle).
We adopt the usual abuse of terminology regarding subobjects by treating them as if they were (concrete) subsets: we write $A_0 \subseteq A$ to denote (the domain of a generic monomorphism representing) a subobject; we call the monomorphism $i : A_0 -> A$ the \defn{inclusion}; etc.
(Note however that for a concrete category $U : \!C -> \Set$, $U$ is not required to preserve monomorphisms.)

Throughout this paper, a \defn{Hilbert space} will mean a norm-complete inner product space over the field $\#F := \#C$ or $\#R$; all arguments will work equally well for both choices of $\#F$.
In the complex case, we use the convention that inner products are linear in the first variable and conjugate-linear in the second.
For a set $X$, $\ell^2(X)$ denotes the Hilbert space of square-summable functions $X -> \#F$; when $X = \#N$, we also write $\ell^2 := \ell^2(\#N)$.

We let $\Hilb$ denote the category of Hilbert spaces and \defn{linear contractions}, meaning linear maps of operator norm $\le 1$.
We let $\Hilbm \subseteq \Hilb$ denote the subcategory with the same objects but only the monomorphisms, i.e., injective linear contractions, and $\Hilbr \subseteq \Hilbm$ denote the further subcategory consisting of only the regular monomorphisms, i.e., \defn{linear isometries}, meaning linear maps preserving the norm (or equivalently the inner product).

\section{Hilbert spaces and linear isometries}

\subsection{Supports of elements}

The following notion is derived from \cite{henry2020abstractelementaryclassnonaxiomatizable}:

\begin{definition}
\label{def:supp}
Let $U : \!C -> \Set$ be a concrete category, $A \in \!C$ be an object, $A_0 \subseteq A$ be a subobject with inclusion $i : A_0 `-> A$, and $x \in UA$.
We say $x$ is \defn{supported on $A_0$ in $A$} if whenever two morphisms $f, g : A \rightrightarrows B$ in $\!C$ agree on $A_0$ (meaning $fi = gi$), then $Uf(x) = Ug(x)$.
\end{definition}

\begin{remark}
\label{rmk:supp-trivial}
If $\!C$ has an initial object $0$ with a monomorphism to $A \in \!C$, then to say that $x \in UA$ is supported on $0$ means that $Uf(x) = Ug(x)$ for every two morphisms $f, g : A \rightrightarrows B$.
We say that $x$ has \defn{trivial support} in this case.

Thus, to say that every $x \in UA$ has trivial support means that $Uf = Ug$ for every $f, g : A \rightrightarrows B$.
\end{remark}

\begin{remark}
\label{rmk:supp-super}
Note that the definition of ``supported on $A_0$ in $A$'' \emph{a priori} depends on the object $A$.

For subobjects $A_0 \subseteq A \subseteq B$ with inclusions $i : A_0 `-> A$ and $f : A `-> B$, if $x \in UA$ is supported on $A_0 \subseteq A$, then $Uf(x) \in UB$ is supported on $A_0 \subseteq B$.
(Given $g, h : B \rightrightarrows C$, if $gfi = hfi$, then $Ug(Uf(x)) = U(gf)(x) = U(hf)(x) = Uh(Uf(x))$ since $x$ is supported on $A_0$.)

Thus, if $x$ has trivial support in $A$, then $x$ also has trivial support in any $B \supseteq A$.
\end{remark}

Fix now a directed-colimit-preserving functor $U : \Hilbr -> \Set$.
Our goal is to show that $U$ is essentially constant on infinite-dimensional spaces.
By \cref{rmk:supp-trivial}, this entails showing that every $x \in UA$ for infinite-dimensional $A$ has trivial support.
As in \cite{LRV}, the crucial technical ingredient is to show that ``supports intersect'':

\begin{lemma}
\label{thm:hilbr-supp-int}
If $A$ is an infinite-dimensional Hilbert space, $x \in UA$, and $A_0, A_1$ finite-dimensional subspaces of $A$ such that $x$ is supported on $A_0$ and on $A_1$, then $x$ is supported on $A_0\cap A_1$.
\end{lemma}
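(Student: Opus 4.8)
The plan is to unwind \cref{def:supp}. Writing $V := A_0 \cap A_1$, fix arbitrary morphisms $f, g : A \rightrightarrows B$ of $\Hilbr$ that agree on $V$; it suffices to show $Uf(x) = Ug(x)$. Note that $B$ is necessarily infinite-dimensional, as it contains the isometric copy $f(A)$ of $A$. By \cref{rmk:supp-super}, $Uf(x) \in UB$ is supported, in $B$, on the finite-dimensional subspaces $f(A_0)$ and $f(A_1)$, whose intersection is precisely $f(V)$.

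The crux is a geometric fact about the unitary group $\mathrm{U}(B)$ of $B$ (the orthogonal group, when $\#F = \#R$): since $f(A_0)$ and $f(A_1)$ are finite-dimensional subspaces of the infinite-dimensional space $B$ with $f(A_0) \cap f(A_1) = f(V)$, the subgroup of $\mathrm{U}(B)$ consisting of unitaries fixing $f(V)$ pointwise is generated by its two subgroups of unitaries fixing $f(A_0)$, respectively $f(A_1)$, pointwise. Granting this, the proof finishes as follows. If $w \in \mathrm{U}(B)$ fixes $f(A_0)$ (or $f(A_1)$) pointwise, then $w$ and $\mathrm{id}_B$ agree on $f(A_0)$ (respectively $f(A_1)$), so the support of $Uf(x)$ noted above gives $Uw(Uf(x)) = U\mathrm{id}_B(Uf(x)) = Uf(x)$; writing an arbitrary unitary $u$ of $B$ fixing $f(V)$ as a product of such factors and using functoriality of $U$ yields $U(uf)(x) = Uu(Uf(x)) = Uf(x)$. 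Now choose $u$ so that, in addition, $uf = g$ on $A_0 + A_1$: the assignment $f(a) \mapsto g(a)$ defines a linear isometry between the finite-dimensional subspaces $f(A_0+A_1)$ and $g(A_0+A_1)$ of $B$ which is the identity on $f(V) = g(V)$, and any isometry between finite-dimensional subspaces of an infinite-dimensional Hilbert space extends to a unitary of that space. Then $uf$ and $g$ agree on $A_0$, so supportedness of $x$ on $A_0$ (\cref{def:supp} applied to $uf, g : A \rightrightarrows B$) gives $U(uf)(x) = Ug(x)$; combining, $Uf(x) = U(uf)(x) = Ug(x)$.

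The heart of the matter — and the place where replacing $\Hilbm$ by $\Hilbr$ really costs something — is the geometric generation fact. Factoring out $f(V)$, it becomes: if $G_0, G_1$ are finite-dimensional subspaces of an infinite-dimensional Hilbert space $H$ with $G_0 \cap G_1 = 0$, then the subgroups $\mathrm{U}(G_0^\perp), \mathrm{U}(G_1^\perp)$ of unitaries of $H$ that are the identity on $G_0$, respectively $G_1$, generate $\mathrm{U}(H)$. I would prove this in three steps. (i) The generated group $\Gamma$ contains $\mathrm{U}(F^\perp)$ for every subspace $F$ in the $\Gamma$-orbit of $G_0$, because $w\,\mathrm{U}(G_0^\perp)\,w^{-1} = \mathrm{U}((wG_0)^\perp)$. (ii) That orbit is all of the $\dim G_0$-dimensional subspaces of $H$: starting from $G_0$ and alternately applying unitaries fixing $G_1$ and unitaries fixing $G_0$ moves $G_0$ to an arbitrary subspace of the same dimension, the infinite-dimensionality of $H$ providing the room to perform these moves. (iii) The subgroups $\mathrm{U}(F^\perp)$, as $F$ ranges over all $\dim G_0$-dimensional subspaces, generate $\mathrm{U}(H)$: given $u \in \mathrm{U}(H)$ and such an $F$, pick $F'$ of the same dimension with $F' \perp F$ and $F' \perp uF$ (possible since $H$ is infinite-dimensional), let $w$ be a unitary with $w|_{F'} = \mathrm{id}_{F'}$ and $w|_F = u|_F$, and note that $w^{-1}u$ fixes $F$ pointwise, so $u = w(w^{-1}u) \in \langle \mathrm{U}(F^\perp), \mathrm{U}(F'^\perp)\rangle$. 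Intuitively, injective contractions, available to the argument in \cite{LRV}, would let one simply collapse the part of $A_0$ transverse to $A_1$; an isometry cannot change the angles between two subspaces, and is instead forced to ``rotate around'' their common part $f(V)$, which is exactly what the generation fact provides.
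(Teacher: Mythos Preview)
Your overall framework is sound and close in spirit to the paper's, repackaged as a group-generation statement for $\mathrm{U}(B)$: both arguments hinge on showing that products of unitaries each fixing $A_0$ or $A_1$ (or their $f$-images) realize every unitary fixing $A_0\cap A_1$. Your formulation in $B$, using the pushed-forward supports of $Uf(x)$ via \cref{rmk:supp-super}, neatly sidesteps the paper's separate reduction to the case where $g$ is surjective, and steps (i) and (iii) are correct and cleanly argued.

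However, step (ii)---that the $\Gamma$-orbit of $G_0$ is the entire Grassmannian of $(\dim G_0)$-dimensional subspaces---is precisely the nontrivial content of the lemma, and you have not proved it. Infinite-dimensionality of $H$ does not by itself ``provide the room'': a unitary fixing $G_1$ preserves all principal angles to $G_1$, so in the one-dimensional case with $|\langle\vec{u},\vec{v}\rangle|=c$, after one such move the image $\vec{u}'$ still has $|\langle\vec{u}',\vec{v}\rangle|=c$, and one computes that $\langle\vec{u}',\vec{u}\rangle$ can range only over $[2c^2-1,\,1]$. When $c>1/\sqrt{2}$ this interval misses $0$, so no single alternation produces a line orthogonal to $\vec{u}$, regardless of how large $H$ is. The paper fills exactly this gap (after first reducing, by a routine induction, to the case $\dim G_0=\dim G_1=1$) via an explicit angle-doubling induction carried out in $\#F^3$: from $\vec{u},\vec{v}$ at angle $\theta$ one finds, by a rotation fixing $\vec{v}$, a vector $\vec{u}_\theta$ in the orbit with $|\langle\vec{u},\vec{u}_\theta\rangle|=\cos(2\theta)$, then recurses on the pair $(\vec{u},\vec{u}_\theta)$ until the angle reaches $\pi/2$, at which point a two-move argument (essentially your step (iii)) finishes. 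Once you supply this or an equivalent argument for step (ii), your proof goes through as written.
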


To prove this, we need the following elementary linear algebra fact.
Recall from \cref{sec:prelim} that $\#F$ denotes the scalar field $\#C$ or $\#R$.

\begin{lemma}
Let $\vec{u}, \vec{v}, \vec{w} \in \#F^3$ be unit vectors such that $\vec{u}, \vec{v}$ are not parallel.
Then there is a finite composition of linear isometric automorphisms $\#F^3 -> \#F^3$ that fix either $\vec{u}$ or $\vec{v}$, which takes $\vec{u}$ to $\vec{w}$.
\end{lemma}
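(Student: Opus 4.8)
The plan is to rephrase the statement as a group-orbit problem and then reduce it to a connectedness argument on the set of possible pairs of inner products. Let $G$ be the group of linear isometric automorphisms of $\#F^3$ generated by all those fixing $\vec u$ together with all those fixing $\vec v$. Since every element of $G$ is by definition a finite composition of such generators, it suffices to show that $\vec w$ lies in the $G$-orbit of $\vec u$. The elementary ingredient I would use repeatedly is that the stabilizer of a unit vector $\vec n\in\#F^3$ — which is just the isometry group of the $2$-dimensional space $\vec n^\perp$, hence $O(2)$ or $U(2)$ — acts transitively on each ``latitude set'' $L_t(\vec n):=\{\vec x:\|\vec x\|=1,\ \langle\vec x,\vec n\rangle=t\}$ for $|t|\le 1$; in particular, any two unit vectors with the same inner product against $\vec n$ differ by an isometry fixing $\vec n$.

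Next I would study the ``profile'' map sending a unit $\vec x$ to $(\langle\vec x,\vec u\rangle,\langle\vec x,\vec v\rangle)\in\#F^2$. Put $c:=\langle\vec u,\vec v\rangle$; since $\vec u,\vec v$ are unit and non-parallel, $|c|<1$. Projecting a unit $\vec x$ onto $W:=\operatorname{span}(\vec u,\vec v)$ and noting that $W^\perp$ contributes a free extra dimension, a short computation shows that the set of realizable profiles is exactly the solid ellipsoid
\[
  E=\bigl\{(s,t)\in\#F^2 : q(s,t):=|s-\bar c\,t|^2+(1-|c|^2)\,|t|^2\le 1-|c|^2\bigr\},
\]
whose interior $\operatorname{int}(E)=\{q<1-|c|^2\}$ is non-empty and convex (since $q$ is positive-definite and $|c|<1$), hence connected. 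By the transitivity fact, if a profile $(s,t)$ is realized by a vector in the $G$-orbit $O$ of $\vec u$, then so is every $(s',t)\in E$ — apply an isometry fixing $\vec v$, which preserves the second coordinate — and likewise every $(s,t')\in E$ via one fixing $\vec u$. Thus the set $P\subseteq E$ of profiles realized in $O$ is closed under changing either coordinate arbitrarily, as long as the pair stays in $E$.

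The argument then finishes by showing $P\supseteq\operatorname{int}(E)$. The set $P\cap\operatorname{int}(E)$ is open in $\operatorname{int}(E)$ — from an interior profile one sweeps out a whole coordinate box, first varying one coordinate, then the other — and closed in it — to reach a limit profile, first match its limiting first coordinate, then its limiting second coordinate. It is non-empty: $\vec u$ has profile $(1,c)$, and a single isometry fixing $\vec v$ carries this to $(|c|^2,c)$, which lies in $\operatorname{int}(E)$ because $q(|c|^2,c)=|c|^2(1-|c|^2)<1-|c|^2$. Connectedness of $\operatorname{int}(E)$ then gives $P\supseteq\operatorname{int}(E)$; in particular every $s$ with $|s|<1$ arises as $\langle\vec x,\vec u\rangle$ for some $\vec x\in O$ (take profile $(s,cs)$, for which $q(s,cs)=(1-|c|^2)|s|^2<1-|c|^2$), and symmetrically for $\langle\cdot,\vec v\rangle$. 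Finally, if $|\langle\vec w,\vec u\rangle|<1$, pick $\vec x\in O$ with $\langle\vec x,\vec u\rangle=\langle\vec w,\vec u\rangle$ and apply one more isometry fixing $\vec u$ to send $\vec x$ to $\vec w$; whereas if $|\langle\vec w,\vec u\rangle|=1$ then $\vec w$ is a unit scalar multiple of $\vec u$, so $|\langle\vec w,\vec v\rangle|=|c|<1$ and the symmetric argument through $\vec v$ applies. In both cases $\vec w\in O$.

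I expect the main obstacle to be the bookkeeping in this last reduction rather than any deep point: the number of isometries required genuinely grows without bound as $\vec u$ and $\vec v$ become nearly parallel, so no fixed-length composition works, which is exactly why a connectedness argument — rather than an explicit construction — is the natural way to package ``finitely many moves suffice''. The linear-algebra identification of $E$, positive-definiteness of $q$, and the check that $\operatorname{int}(E)$ meets each of the hyperplanes $\langle\cdot,\vec u\rangle=s$ and $\langle\cdot,\vec v\rangle=t$ for $|s|,|t|<1$ are routine and go through verbatim for $\#F=\#R$ and $\#F=\#C$. In the real case one could instead simply quote the classical fact that rotations about two distinct axes generate the rotation group of $\#R^3$, but the argument above has the advantage of being uniform in the scalar field.
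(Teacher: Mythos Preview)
Your proof is correct and takes a genuinely different route from the paper's. The paper argues by induction on the least $n$ with $|\langle\vec u,\vec v\rangle|\le\cos(\pi/2^{n+1})$: the base case $\vec u\perp\vec v$ is handled explicitly in two moves, and for the inductive step one finds, via an isometry $f_\theta$ fixing $\vec v$, a vector $\vec u_\theta=f_\theta(\vec u)$ with $|\langle\vec u,\vec u_\theta\rangle|=\cos(\pi/2^{n+1})$, so that the pair $(\vec u,\vec u_\theta)$ falls under the induction hypothesis; since stabilizers of $\vec u_\theta$ are $f_\theta$-conjugates of stabilizers of $\vec u$, this closes the induction. Your approach instead recasts the problem as transitivity of the generated group on the unit sphere and proves it by a clopen argument on the convex open profile region $\operatorname{int}(E)\subseteq\#F^2$. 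The paper's argument is more constructive and yields an explicit (rapidly growing) bound on the number of moves in terms of the angle between $\vec u$ and $\vec v$; yours is cleaner conceptually, avoids the double-angle computation, and makes transparent both why dimension $3$ is needed (so that $\operatorname{int}(E)\ne\varnothing$) and why no uniform bound on the number of moves can exist. Both work uniformly over $\#R$ and $\#C$.
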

\begin{proof}
Since $\vec{u}, \vec{v}$ are not parallel, we have $\abs{\ang{\vec{u}, \vec{v}}} < 1 = \cos(0)$ by the Cauchy--Schwarz inequality.
We induct on the least $n \in \#N$ such that $\abs{\ang{\vec{u}, \vec{v}}} \le \cos(\pi/2^{n+1})$ (i.e., when $\#F = \#R$, the least $n$ such that the acute angle between the lines spanned by $\vec{u}, \vec{v}$ is $\ge 90^\circ/2^n$).

First, suppose $n = 0$, i.e., $\vec{u} \perp \vec{v}$.
By conjugating $\#F^3$ by a suitable linear isometry, we may assume $\vec{u} = \vec{e}_1$ and $\vec{v} = \vec{e}_2$ are the standard basis vectors.
Let $\vec{w} = (a,b,c) \in \#F^3$.
Then there is an isometry $\#F^3 -> \#F^3$ fixing $\vec{v}$ taking $\vec{u} |-> (a,0,d)$ for some $d \in \#F$ (e.g., $d = \sqrt{1-\abs{a}^2}$), followed by an isometry fixing $\vec{u}$ taking $(a,0,d) |-> (a,b,c) = \vec{w}$.

Now suppose $n$ is such that the claim is known for any $\vec{u}, \vec{v}$ with
$\abs{\ang{\vec{u}, \vec{v}}} \le \cos(\pi/2^{n+1})$;
we must show the same when $n$ is replaced with $n+1$.
By conjugating as above, we may assume $\vec{v} = \vec{e}_1$ and
$\vec{u} = (\ang{\vec{u},\vec{v}},a,0)$
where
$a := \sqrt{1-\abs{\ang{\vec{u},\vec{v}}}^2}$.
Observe that for any $\theta \in \#R$, there is an isometry $f_\theta : \#F^3 -> \#F^3$ fixing $\vec{v}$ and taking
$\vec{u} |-> (\ang{\vec{u},\vec{v}},a\cos(\theta),a\sin(\theta)) =: \vec{u}_\theta$.
When $\theta = 0$, we have
$\ang{\vec{u},\vec{u}_\theta}
= \ang{\vec{u},\vec{u}}
= 1$;
when $\theta = \pi$, we have
\begin{align*}
\abs{\ang{\vec{u},\vec{u}_\theta}}
&= \abs{\ang{\vec{u},\vec{v}}}^2 - a^2
= 2\abs{\ang{\vec{u},\vec{v}}}^2 - 1 \\
&\le 2\cos(\pi/2^{n+2})^2 - 1
= \cos(\pi/2^{n+1}).
\end{align*}
Thus there is some $0 \le \theta \le \pi$ such that
$\abs{\ang{\vec{u},\vec{u}_\theta}} = \cos(\pi/2^{n+1})$.
By the induction hypothesis, there is a finite composition of isometries fixing either $\vec{u}$ or $\vec{u}_\theta$ and taking $\vec{u} |-> \vec{w}$; the isometries that fix $\vec{u}_\theta$ are a $f_\theta$-conjugate of an isometry that fixes $\vec{u}$, so we are done.
\end{proof}

\begin{proof}[Proof of \cref{thm:hilbr-supp-int}]
We may assume that $A_0 \cap A_1$ is codimension $1$ in both $A_0$ and $A_1$.
Indeed, assuming this special case has been shown, we may deduce the general case as follows.
If $A_0 \subseteq A_1$ or $A_1 \subseteq A_0$, then the claim is trivial.
If $A_0 \cap A_1$ is codimension $1$ in $A_0$ and codimension $n > 0$ in $A_1$, then letting $\vec{v}_0, \dotsc, \vec{v}_{n-1}$ be a basis for $A_1 \cap (A_0 \cap A_1)^\perp$, we have that $x$ is supported on the linear span of $A_0 \cup \set{\vec{v}_0, \dotsc, \vec{v}_{n-2}}$, and on $A_1$, which have intersection of codimension $1$ in each, whence by the special case, $x$ is supported on said intersection, the linear span of $(A_0 \cap A_1) \cup \set{\vec{v}_0, \dotsc, \vec{v}_{n-2}}$;
by induction on $n$, we eventually get that $x$ is supported on $A_0 \cap A_1$.
Finally, in general if $A_0 \cap A_1$ is codimension $m > 0$ in $A_0$, we have that $x$ is supported on $A_0$ and on the span of $A_1$ together with $m-1$ basis vectors in $A_0 \cap (A_0 \cap A_1)^\perp$; by the previous case, we get that $x$ is supported on the span of $A_0 \cap A_1$ together with said $m-1$ vectors; now induct on $m$.

So suppose $A_0$ is spanned by $(A_0 \cap A_1) \cup \set{\vec{u}}$, and $A_1$ by $(A_0 \cap A_1) \cup \set{\vec{v}}$, where $\vec{u} \in A_0 \cap (A_0 \cap A_1)^\perp$ and $\vec{v} \in A_1 \cap (A_0 \cap A_1)^\perp$ are unit vectors.
Let $f, g : A \rightrightarrows B \in \Hilbr$ agree on $A_0 \cap A_1$; we must show that $Uf(x) = Ug(x)$.

First, suppose $g$ is surjective.
Put $\vec{w} := g^{-1}(f(\vec{u}))$; then $\vec{w}$ is a unit vector, such that
\begin{align*}
\vec{w} &\perp A_0 \cap A_1,
\shortintertext{since}
g(\vec{w}) = f(\vec{u}) &\perp f(A_0 \cap A_1) = g(A_0 \cap A_1).
\end{align*}
By the preceding lemma applied to the subspace spanned by $\vec{u}, \vec{v}, \vec{w}$, which is orthogonal to $A_0 \cap A_1$, find linear isometric automorphisms $h_0, \dotsc, h_{k-1} : A \cong A$ each of which fixes either $(A_0 \cap A_1) \cup \vec{u}$ or $(A_0 \cap A_1) \cup \vec{v}$, hence either $A_0$ or $A_1$, such that $(h_0 \circ \dotsb \circ h_{k-1})(\vec{u}) = \vec{w}$.
Then in the sequence
\begin{align*}
g,\;
g \circ h_0,\;
g \circ h_0 \circ h_1,\;
g \circ h_0 \circ h_1 \circ h_2,\;
\dotsc,\;
g \circ h_0 \circ \dotsb \circ h_{k-1},\;
f : A -> B,
\end{align*}
each consecutive pair of maps agrees on either $A_0$ or $A_1$, so $Ug(x) = Uf(x)$ since $x$ is supported on $A_0$ and on $A_1$.
(This argument is a more involved version of \cite[Lemma~14]{LRV}.)

If $g$ is not surjective, note that $f(A) + g(A) \subseteq B$ has the same dimension as $A$.
Let $h : A \cong f(A) + g(A)$ be an arbitrary isometric isomorphism, let $i : f(A) + g(A) `-> B$ be the subspace inclusion, and let $f', g' : A \rightrightarrows f(A) + g(A)$ be the codomain restrictions of $f, g$.
Then $Uf'(x) = Uh(x) = Ug'(x)$ by the surjective case, whence $Uf(x) = Ui(Uf'(x)) = Ui(Ug'(x)) = Ug(x)$.
\end{proof}

\begin{corollary}
\label{thm:hilbr-supp}
For any $A \in \Hilbr$ and $x \in UA$, there is a unique minimal finite-dimensional subspace of $A$ on which $A$ is supported.
We call this the \defn{support of $x$ in $A$}, denoted $\supp_A(x)$.
\end{corollary}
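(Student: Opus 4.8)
The plan is to reduce this to two facts about the collection $\mathcal{F}$ of finite-dimensional subspaces of $A$ on which $x$ is supported: that $\mathcal{F}$ is nonempty, and that it is closed under binary intersections. Granting both, any $V \in \mathcal{F}$ of least dimension is the desired unique minimal element: for any $V' \in \mathcal{F}$ we have $V \cap V' \in \mathcal{F}$, hence $\dim(V \cap V') \ge \dim V$ by minimality, and since $V \cap V' \subseteq V$ this forces $V \cap V' = V$, i.e.\ $V \subseteq V'$. So $\supp_A(x) := V$ works.

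For nonemptiness, I would write $A$ as the directed colimit of its finite-dimensional subspaces $A_i$, with the inclusions as transition maps. Since $U$ preserves this colimit, the canonical map $\coprod_i UA_i -> UA$ is surjective, so $x = Uf_i(x_i)$ for some index $i$, some $x_i \in UA_i$, with $f_i : A_i \hookrightarrow A$ the inclusion. Now $x_i$ is trivially supported on $A_i$ in $A_i$ (two morphisms out of $A_i$ that agree on $A_i$ are literally equal), so by \cref{rmk:supp-super}, $x = Uf_i(x_i)$ is supported on $A_i$ in $A$; thus $A_i \in \mathcal{F}$.

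Closure under intersection is exactly the content of \cref{thm:hilbr-supp-int} when $A$ is infinite-dimensional. When $A$ is finite-dimensional one cannot invoke \cref{thm:hilbr-supp-int} as a black box, but its proof adapts: first perform the same linear-algebraic reduction (carried out inside $A$) to the case that $A_0 \cap A_1$ has codimension $1$ in each of $A_0, A_1$; then, given $f, g : A \rightrightarrows B$ agreeing on $A_0 \cap A_1$, restrict the codomain to $f(A) + g(A)$ and connect $f$ and $g$ by the chain of morphisms $A -> B$ built from the automorphisms supplied by the preceding linear-algebra lemma, exactly as in the proof of \cref{thm:hilbr-supp-int} (with a direct variant when neither $f$ nor $g$ is onto $f(A)+g(A)$). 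There are always at least three available dimensions orthogonal to $A_0 \cap A_1$ inside $f(A)+g(A)$ — enough room to run the construction — except when $A = A_0 + A_1$ and $f, g$ are isomorphisms onto $B$, which must be handled separately.

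The genuine technical obstacle here — intersecting supports of finite-dimensional subspaces — has, in the generic infinite-dimensional case, already been overcome in \cref{thm:hilbr-supp-int}, so the corollary is largely a formal consequence of it together with the colimit-preservation of $U$; I expect the only remaining difficulty to be the low-dimensional degenerate case of the finite-dimensional situation just flagged.
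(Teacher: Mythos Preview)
Your approach is the same as the paper's: use directed-colimit preservation of $U$ to produce some finite-dimensional subspace supporting $x$ (via \cref{rmk:supp-super}), then take one of minimal dimension and invoke \cref{thm:hilbr-supp-int} to see it is contained in every other, hence is the unique minimal support.

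You go beyond the paper in observing that \cref{thm:hilbr-supp-int} is stated only for infinite-dimensional $A$ and in sketching how its proof might be adapted to finite-dimensional $A$; the paper's own proof simply invokes \cref{thm:hilbr-supp-int} without comment on this point. Your adaptation is admittedly left incomplete at the degenerate case you flag, but since the corollary is only ever applied downstream to infinite-dimensional spaces, this does not affect anything in the sequel.
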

\begin{proof}
Since $U$ preserves directed colimits and $A$ is the directed colimit of its finite-dimensional subspaces $A_0 \subseteq A$, there is some such $A_0$, with inclusion $i : A_0 -> A$, as well as $x_0 \in UA_0$ such that $x = Ui(x_0)$.
In particular, it follows by \cref{rmk:supp-super} that $x$ is supported on $A_0$ (in $A_0$, hence in $A$).
Let $\supp_A(x) \subseteq A$ be a finite-dimensional subspace of minimal dimension on which $x$ is supported; by \cref{thm:hilbr-supp-int}, it is in fact the unique minimal support of $x$ in $A$.
\end{proof}

\begin{remark}
\label{rmk:hilbr-supp-super}
By \cref{rmk:supp-super}, for a linear isometric embedding $f : A `-> B \in \Hilbr$ and $x \in UA$, we have $f(\supp_A(x)) \supseteq \supp_B(Uf(x))$.

Thus for a linear isometric isomorphism $f : A \cong B$, we have $f(\supp_A(x)) = \supp_B(Uf(x))$.
\end{remark}

\subsection{$U$ is essentially constant}

\begin{proposition}
\label{Uconstant-lambda}
There exists an infinite cardinal $\lambda$, such that $Uf = Ug$ for any pair of linear isometric embeddings $f, g : A \rightrightarrows B \in \Hilbr$ where $\dim(A) = \lambda$.
\end{proposition}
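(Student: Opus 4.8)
The plan is to exhibit an infinite cardinal $\lambda$ with the property that every $x \in UA$ for $\dim A = \lambda$ has $\supp_A(x) = 0$; by \cref{rmk:supp-trivial} this is exactly the assertion that $Uf = Ug$ for all $f,g : A \rightrightarrows B$. I would first record two structural features of $U$. \emph{(Injectivity on inclusions.)} If $A \subseteq B$ with $\dim A \ge \aleph_0$, then the map $UA \to UB$ is injective: given $y, y' \in UA$ with equal images, pull them back to a common finite-dimensional $A_1 \subseteq A$; since $U$ preserves directed colimits their images already agree in $UC$ for some finite-dimensional $A_1 \subseteq C \subseteq B$, and any isometric embedding $C \hookrightarrow A$ extending $A_1 \hookrightarrow A$ (available because $\dim A \ge \aleph_0$) carries that identity back, forcing $y = y'$; hence $Uf$ is injective for every morphism $f$ with infinite-dimensional domain. \emph{(Confinement.)} If $x \in UA$ and $W \subseteq A$ is infinite-dimensional with $\supp_A(x) \subseteq W$, then $x$ lies in the image of $UW \to UA$: pick finite-dimensional $B \subseteq A$ with $x$ in the image of $UB$ and $\supp_A(x) \subseteq B$, and an automorphism $\theta$ of $A$ fixing $\supp_A(x)$ pointwise with $\theta(B) \subseteq W$; since $\theta$ and $\operatorname{id}_A$ agree on $\supp_A(x)$, \cref{def:supp} gives $x = U\theta(x)$, which factors through $UW$.

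These have two consequences. First, by \cref{rmk:hilbr-supp-super} and transitivity of $\operatorname{Aut}(A)$ on $n$-dimensional subspaces, every $x \in UA$ with $\supp_A(x) = 0$ lies in the image of $U(A_0^{(n)}) \to UA$ for a single fixed $n$-dimensional $A_0^{(n)} \subseteq A$; hence $UA$ has at most $\nu := \aleph_0 \cdot \sup_{n} |U(\#F^n)|$ elements of trivial support, a bound independent of $A$. Second, applying confinement with $W$ separable shows that if some element of $UA$ has nonzero support then so does some element of $U(\ell^2)$; contrapositively, if no element of $U(\ell^2)$ has nonzero support, then (by confinement again, together with \cref{rmk:hilbr-supp-super}) no element of any infinite-dimensional $UA$ does, so $U$ is essentially constant on all infinite-dimensional spaces and $\lambda = \aleph_0$ works.

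It remains to handle the case where $U(\ell^2)$ — hence, for a suitably large $A = \ell^2(\lambda)$, also $UA$ — carries an element of nonzero support. Choose $x \in UA$ whose support $S := \supp_A(x)$ has minimal positive dimension $m$ among elements of $UA$; fix a hyperplane $S' \subset S$ and a unit vector $v \in S \cap (S')^\perp$. For an automorphism $\theta$ of $A$ fixing $S'$ pointwise, $\supp_A(U\theta(x)) = \theta(S) = S' \oplus \#F\,\theta(v)$ by \cref{rmk:hilbr-supp-super}, and by \cref{thm:hilbr-supp-int} together with the minimality of $m$ any coincidence $U\theta(x) = U\theta'(x)$ forces $\theta(S) = \theta'(S)$. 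Taking $\theta$ to rotate $v$ only inside the plane spanned by $v$ and one fixed vector orthogonal to $S$, all the (continuum-many, pairwise-distinct) elements $U\theta(x)$ lie in the image of $UA_1 \to UA$ for the \emph{fixed finite-dimensional} subspace $A_1$ they span — already a strong restriction; allowing $v$ to rotate into a larger separable or infinite-dimensional subspace and invoking confinement gives analogous restrictions at those scales, and each such family pulls back along the injections above to distinct nonzero-support elements of $U$ on lower-dimensional domains. \textbf{The main obstacle is to convert these restrictions into an outright contradiction}: one must balance the many nonzero-support elements produced by rotation against the fixed bound $\nu$ on trivial-support elements and the confinement constraints, choosing $\lambda$ large enough to make the bookkeeping strict. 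This is the combinatorial heart of the argument, and the place where replacing $\Hilbm$ by $\Hilbr$ — so that one has only isometries, not contractions, with which to interpolate between morphisms — is most costly.
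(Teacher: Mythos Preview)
Your proposal is not a proof: you explicitly stop at ``the main obstacle is to convert these restrictions into an outright contradiction'' and never close the argument. The missing idea is short and is exactly the counting step from \cite[Theorem~18]{LRV}, which the paper reproduces verbatim. Choose $\lambda > 2^{\aleph_0}$ of countable cofinality such that $|U(\ell^2(\lambda))| \le \lambda$; this is possible because $U$ preserves directed colimits, so $|U(\ell^2(\lambda))|$ is bounded by $\lambda$ times $\sup_n |U(\#F^n)|$, and one can always find such a $\lambda$ above any given bound. Now if some $x \in UA$ with $\dim A = \lambda$ had $\supp_A(x)$ of dimension $n \ge 1$, then since $|A| = \lambda^{\aleph_0}$ while each $n$-dimensional subspace has only $2^{\aleph_0}$ points, there are $\lambda^{\aleph_0}$ distinct $n$-dimensional subspaces of $A$; rotating $x$ by automorphisms (using \cref{rmk:hilbr-supp-super}) produces an element of $UA$ with each such subspace as its support, hence $\lambda^{\aleph_0}$ distinct elements of $UA$. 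But $\lambda^{\aleph_0} > \lambda \ge |UA|$ by K\H{o}nig's theorem, a contradiction. No minimality of support dimension, no confinement to planes, no injectivity lemma is needed.

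Two further comments. First, much of your preliminary apparatus is either unnecessary here or belongs elsewhere: the injectivity-on-inclusions statement is the paper's \cref{injection}, proved \emph{after} this proposition and used only later; your confinement lemma and the bound $\nu$ on trivial-support elements are correct but play no role once you have the counting argument. Second, your diagnosis that ``replacing $\Hilbm$ by $\Hilbr$ is most costly'' at this step is misplaced. The counting argument above is identical for $\Hilbm$ and $\Hilbr$; the genuine extra work in passing to $\Hilbr$ was already absorbed in the proof of \cref{thm:hilbr-supp-int} (that supports intersect), which you are entitled to cite here as a black box.
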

\begin{proof}
Following \cite[Theorem~18]{LRV}, pick $\lambda > 2^{\aleph_0}$ of countable cofinality such that $U$ maps a Hilbert space of dimension $\lambda$ to a set of cardinality $\lambda$.
(For the reader's convenience, we sketch an elementary argument for the existence of such $\lambda$ that does not use any accessible category theory.
Let $\lambda_0$ be the supremum of $2^{\aleph_0}$ and the cardinalities of $U(\#F^n)$ for each $n \in \#N$.
Then for any Hilbert space $A$ of dimension $\lambda \ge \lambda_0$, without loss of generality $A = \ell^2(\lambda)$, $A$ is the directed colimit of the subspaces $\ell^2(F)$ for all finite subsets $F \subseteq \lambda$; for each such $F$, we have $\abs{U(\ell^2(F))} \le \lambda_0 \le \lambda$; and there are $\lambda$-many such $F$, hence
$
\abs{U(\ell^2(\lambda))}
= \abs[\big]{\colim_F U(\ell^2(F))}
\le \lambda \cdot \lambda = \lambda.
$
It suffices to take any $\lambda \ge \lambda_0$ of countable cofinality, e.g., $\lambda = \sup \{\lambda_0, \lambda_0^+, \lambda_0^{++}, \dotsc\}$.)

Now let $A$ be a Hilbert space of dimension $\lambda$.
Suppose for contradiction that there were $x \in UA$ with $\supp_A(x) \subseteq A$ a nontrivial finite-dimensional subspace, say of dimension $1 \le n < \aleph_0$.
Observe that $\abs{A} = \abs{\ell^2(\lambda)} = \lambda^{\aleph_0}$, and $A$ is the union of all subspaces $B \subseteq A$ of dimension $n$, each of which has cardinality $2^{\aleph_0} < \lambda < \lambda^{\aleph_0}$; thus there are $\lambda^{\aleph_0}$ distinct subspaces $B \subseteq A$ of dimension $n$.
However, for any such $B \subseteq A$ of dimension $n$, there is a linear isometric automorphism $f : A -> A$ taking $\supp_A(x)$ to $B$, and so there must be $y = Uf(x) \in UA$ such that $\supp_A(y) = B$ (by the preceding remark).
This is a contradiction as $\abs{UA} = \lambda < \lambda^{\aleph_0}$ by Kőnig's theorem.
Thus every $x \in UA$ has trivial support, which yields the desired claim by \cref{rmk:supp-trivial}.
\end{proof}

\begin{corollary}
\label{Uconstant}
$U$ is constant on all pairs of morphisms $f, g : A \rightrightarrows B \in \Hilbr$ where $\dim(A) \ge \lambda$.
\end{corollary}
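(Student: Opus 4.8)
The plan is to bootstrap from \cref{Uconstant-lambda}, which already handles the case $\dim(A) = \lambda$ exactly, to arbitrary $A$ with $\dim(A) \ge \lambda$, using that $U$ preserves directed colimits. So I would fix a pair of linear isometric embeddings $f, g : A \rightrightarrows B$ in $\Hilbr$ with $\dim(A) \ge \lambda$; by \cref{rmk:supp-trivial} it suffices to show $Uf(x) = Ug(x)$ for every $x \in UA$.

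The crucial observation is that $A$ is the directed colimit of its subspaces of dimension exactly $\lambda$. Indeed, since $\lambda$ is infinite, the span of the union of two subspaces of dimension $\le \lambda$ again has dimension $\le \lambda$, so the $\lambda$-dimensional subspaces of $A$ form a directed family under inclusion; and since $\dim(A) \ge \lambda \ge \aleph_0$, every finite subset of $A$ extends to a subspace of dimension exactly $\lambda$, so this family covers $A$. (When $\dim(A) = \lambda$ the family simply has $A$ itself as a top element, so the argument is uniform.)

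Since $U$ preserves directed colimits, every $x \in UA$ then factors as $x = U(\iota)(x')$ for some $\lambda$-dimensional subspace $A' \subseteq A$ with inclusion $\iota : A' `-> A$ and some $x' \in UA'$. The composites $f \circ \iota$ and $g \circ \iota$ are linear isometric embeddings $A' \rightrightarrows B$ out of a space of dimension exactly $\lambda$, so \cref{Uconstant-lambda} applies to give $U(f\iota) = U(g\iota)$; evaluating both sides at $x'$ yields $Uf(x) = U(f\iota)(x') = U(g\iota)(x') = Ug(x)$, as required.

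I do not expect a genuine obstacle here: the only step warranting a moment's care is the verification that the $\lambda$-dimensional subspaces of $A$ are directed with union $A$, which comes down to the cardinal arithmetic identity $\lambda + \lambda = \lambda$ for infinite $\lambda$ together with the hypothesis $\dim(A) \ge \lambda$. (Alternatively, one could phrase the same argument in terms of supports: $x'$ has trivial support in $A'$ by \cref{Uconstant-lambda}, hence $x$ has trivial support in $A$ by \cref{rmk:supp-super}, and one concludes by \cref{rmk:supp-trivial}.)
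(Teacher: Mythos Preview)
Your proof is correct and follows essentially the same approach as the paper: both arguments express $A$ as the directed colimit of its subspaces of dimension exactly $\lambda$, apply \cref{Uconstant-lambda} to the restrictions of $f,g$, and use that $U$ preserves directed colimits. Your version is simply more explicit about why the $\lambda$-dimensional subspaces are directed and cover $A$, and works pointwise with elements of $UA$ rather than directly with the morphisms $Uf,Ug$.
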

\begin{proof}
$f, g$ are the directed colimits of their respective restrictions $f_0, g_0$ to all subspaces $A_0 \subseteq A$ of dimension exactly $\lambda$, and $Uf_0 = Ug_0$ for all such $A_0$ by the preceding \namecref{Uconstant-lambda}; thus since $U$ preserves directed colimits, $Uf = Ug$.
\end{proof}

\begin{lemma}
\label{injection}
$U$ maps every $f : A -> B \in \Hilbr$ for $\dim(A) \ge \aleph_0$ to an injection.
\end{lemma}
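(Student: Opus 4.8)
The plan is to make two routine reductions and then exploit the homogeneity of infinite‑dimensional Hilbert spaces to realize $\iota$ ``as a restriction of'' an isometric endomorphism of $A$ that fixes a prescribed finite‑dimensional subspace. First, for $f : A -> B$ with $\dim A \ge \aleph_0$, the image $f(A)$ is a closed subspace of $B$ (being isometric to the complete space $A$), and $f$ factors as $f = j \circ f_0$ with $f_0 : A -> f(A)$ an isometric isomorphism and $j : f(A) `-> B$ the inclusion. Since $U$ preserves isomorphisms, $Uf_0$ is a bijection, so $Uf$ is injective iff $Uj$ is; thus it suffices to treat a subspace inclusion $\iota : A `-> B$ with $\dim A \ge \aleph_0$.

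Second, $B$ is the directed colimit of the family of its closed subspaces $B_0$ with $A \subseteq B_0 \subseteq B$ and $\dim B_0 = \dim A$: this family is directed under inclusion (since $\dim A$ is infinite, the closure of the sum of two of its members again has dimension $\dim A$), and its union is all of $B$ (for each $b \in B$ the closed subspace spanned by $A$ and $b$ is such a $B_0$, again since $\dim A$ is infinite). As $U$ preserves this colimit, if $x, y \in UA$ satisfy $U\iota(x) = U\iota(y)$ in $UB$ then $U\iota_0(x) = U\iota_0(y)$ already in $UB_0$ for some such $B_0$, where $\iota_0 : A `-> B_0$ is the inclusion. Hence we may assume $\dim B = \dim A =: \mu \ge \aleph_0$.

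So let $x, y \in UA$ with $U\iota(x) = U\iota(y)$; we must show $x = y$. Put $\kappa := \dim(B \cap A^\perp) \le \dim B = \mu$ (the orthogonal complement of $A$ in $B$), and let $S := \supp_A(x) + \supp_A(y)$, which is a finite‑dimensional subspace of $A$ by \cref{thm:hilbr-supp}. Since $\mu$ is infinite and $\dim S$ is finite, $\dim(A \cap S^\perp) = \mu \ge \kappa$, so elementary cardinal arithmetic provides an isometric embedding $\psi : A -> A$ that restricts to the identity on $S$ and whose image has codimension $\kappa$ in $A$ (namely: the identity on $S$ together with an isometric self‑embedding of $A \cap S^\perp$ of codimension $\kappa$). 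Since $\dim(B \cap A^\perp) = \kappa = \dim(A \cap \psi(A)^\perp)$, we obtain an isometric embedding $\sigma : B -> A$ by letting $\sigma$ agree with $\psi$ on $A$ and extending it by an arbitrary isometric isomorphism $B \cap A^\perp \cong A \cap \psi(A)^\perp$; then $\sigma \circ \iota = \psi$. Consequently $U\psi(x) = U\sigma(U\iota(x)) = U\sigma(U\iota(y)) = U\psi(y)$. On the other hand, $\psi$ and $\mathrm{id}_A$ agree on $S$, hence on $\supp_A(x) \subseteq S$, and $x$ is supported on $\supp_A(x)$ by \cref{thm:hilbr-supp}, so $U\psi(x) = U(\mathrm{id}_A)(x) = x$; symmetrically $U\psi(y) = y$. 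Therefore $x = y$.

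I expect the second reduction to demand the most care: one must verify that the indicated family of subspaces is directed with colimit $B$, and that $U$ preserves this colimit --- and this is exactly where the infinite‑dimensionality of $A$ first enters, through the identity $\dim A + \dim A = \dim A$. The conceptual heart of the argument is the last step's observation that an isometric endomorphism of $A$ fixing the \emph{finite‑dimensional} support of an element must act as the identity on that element, combined with the fact that, in infinite dimensions, such an endomorphism can always be arranged to factor through the given inclusion $\iota$.
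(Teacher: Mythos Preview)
Your proof is correct. Both you and the paper first reduce to the case $\dim B = \dim A$ by exactly the same directed-colimit argument, and both then exploit the homogeneity of infinite-dimensional Hilbert spaces over a finite-dimensional subspace; but the final step is organized differently. The paper does not invoke supports at all: it pulls $x,y$ back via directed-colimit preservation to elements $x_0,y_0 \in UA_0$ for some finite-dimensional $A_0 \subseteq A$, then chooses a single isomorphism $A \cong B$ that carries the inclusion $A_0 \hookrightarrow A$ to $f|_{A_0}$, and reads off $x=y$ from the resulting commutative square. You instead use \cref{thm:hilbr-supp} to name the finite-dimensional subspace $S = \supp_A(x)+\supp_A(y)$, build a left inverse $\sigma : B \to A$ to $\iota$ \emph{up to an endomorphism $\psi$ fixing $S$}, and then use the defining property of support to conclude $U\psi$ fixes $x$ and $y$. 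Your route is a bit more constructive about the retraction and makes explicit use of the support machinery already developed; the paper's route is slightly more economical in that it needs only directed-colimit preservation, not the existence of minimal supports. Either way the heart of the matter is the same: an infinite-dimensional Hilbert space is homogeneous over any finite-dimensional subspace, and that finite-dimensional subspace is all that matters for $x$ and $y$.
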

\begin{proof}
First, suppose $\dim(B) = \dim(A)$. Let $x, y \in UA$, and suppose $Uf(x) = Uf(y) \in UB$.
Since $A$ is infinite-dimensional, it is the directed colimit of all finite-dimensional $A_0 \subseteq A$, and so there are $x_0, y_0 \in UA_0$ for some such $A_0$, with inclusion $i : A_0 -> A$, such that $x = Ui(x_0)$ and $y = Ui(y_0)$.
Since $A, B$ have the same infinite dimension, and $A_0 \subseteq A$ is finite-dimensional, there is an isomorphism $A \cong B$ that takes the inclusion $i : A_0 `-> A$ to the embedding $fi : A_0 `-> A `-> B$.
\begin{equation*}
\begin{tikzcd}
x_0, y_0 \in UA_0 & A_0 \dar[hook,"i"'] \rar[hook,"i"] & A \dar[hook, "f"] & UA \ni x, y \\
x, y \in UA & A \rar[phantom, "\cong"] & B & UB \ni Uf(x) = Uf(y)
\end{tikzcd}
\end{equation*}
By assumption, $x_0, y_0$ map to the same thing in $UB$ along the top-right composite, hence also along the left-bottom, hence also along the left, i.e., $x = Ui(x_0) = Ui(y_0) = y$.

This proves the result for $\dim(B) = \dim(A)$.
If $\dim(B) > \dim(A)$, write $f : A -> B$ as a directed colimit of $f : A -> B_0$, where $B_0 \subseteq B$ contains $\im(f)$ and has the same dimension as $A$.
\end{proof}

\begin{theorem}
\label{thm:hilbr-Uconstant}
The restriction of any directed-colimit-preserving functor $U : \Hilbr -> \Set$ to the full subcategory $\Hilbr^{\ge\aleph_0}$ of infinite-dimensional spaces is naturally isomorphic to a constant functor.
\end{theorem}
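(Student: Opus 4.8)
The plan is to establish two closure properties of $U$ on $\Hilbr^{\ge\aleph_0}$ and then deduce the theorem by a soft argument. The two properties are: (i) $U$ carries every morphism of $\Hilbr^{\ge\aleph_0}$ to a bijection, and (ii) $U$ carries any parallel pair $f, g : A \rightrightarrows B$ in $\Hilbr^{\ge\aleph_0}$ to the same function $Uf = Ug$. Granting these, I would finish as follows. Since every infinite-dimensional Hilbert space contains a countably infinite orthonormal set, there is an isometric embedding $\ell^2 \hookrightarrow A$ for every $A \in \Hilbr^{\ge\aleph_0}$; choose one, say $c_A$, for each $A$, and set $\eta_A := Uc_A : U\ell^2 \to UA$. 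Each $\eta_A$ is a bijection by (i), and for any $f : A \to B$ the morphisms $f c_A$ and $c_B$ are parallel (both $\ell^2 \rightrightarrows B$), so $Uf \circ \eta_A = U(fc_A) = Uc_B = \eta_B$ by (ii). Hence $\eta$ is a natural isomorphism from the constant functor with value $U\ell^2$ to $U|_{\Hilbr^{\ge\aleph_0}}$, which is exactly the assertion.

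To prove (ii), I would reduce to \cref{Uconstant} by enlarging both spaces. Replace $A$ by $\widehat A := A \oplus \ell^2(\lambda)$ and $B$ by $\widehat B := B \oplus \ell^2(\lambda)$ (orthogonal direct sums), which have dimension $\max(\dim A,\lambda)\ge\lambda$ and $\max(\dim B,\lambda)\ge\lambda$, and note that the coordinate inclusions $A \hookrightarrow \widehat A$ and $B \hookrightarrow \widehat B$ are isometric embeddings. The $\ell^2(\lambda)$-summand of $\widehat B$ is a $\lambda$-dimensional subspace orthogonal to the image of $A$ in $\widehat B$, so both composites $A \xrightarrow{f} B \hookrightarrow \widehat B$ and $A \xrightarrow{g} B \hookrightarrow \widehat B$ extend along $A \hookrightarrow \widehat A$ to isometric embeddings $\widehat f, \widehat g : \widehat A \rightrightarrows \widehat B$ (sending the $\ell^2(\lambda)$-summand of $\widehat A$ by some isometry onto that of $\widehat B$). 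Now $U\widehat f = U\widehat g$ by \cref{Uconstant}, so precomposing with $U(A \hookrightarrow \widehat A)$ gives $U(B \hookrightarrow \widehat B)\circ Uf = U(B \hookrightarrow \widehat B)\circ Ug$; since $U(B \hookrightarrow \widehat B)$ is injective by \cref{injection}, we get $Uf = Ug$.

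To prove (i), injectivity is \cref{injection}, so only surjectivity of $Uf : UA \to UB$ remains. Given $y \in UB$, since $B$ is the directed colimit of its finite-dimensional subspaces and $U$ preserves this colimit, we may write $y = U\iota(y_0)$ for some finite-dimensional $B_0 \subseteq B$ with inclusion $\iota$ and some $y_0 \in UB_0$. Pick a linear isometric automorphism $\psi$ of $B$ with $\psi(B_0) \subseteq f(A)$: such $\psi$ exists because $f(A)$ is infinite-dimensional and hence contains a copy of $B_0$, whose orthocomplement in $B$ has the same (full) dimension as $B_0^\perp$. Then $\psi^{-1}f : A \to B$ is parallel to $f$, so $U(\psi^{-1}f) = Uf$ by part (ii); on the other hand $\im(\psi^{-1}f) = \psi^{-1}(f(A)) \supseteq B_0$, so setting $A_0 := (\psi^{-1}f)^{-1}(B_0)$ (a finite-dimensional subspace of $A$ on which $\psi^{-1}f$ restricts to an isometric isomorphism onto $B_0$), the inclusion $\iota$ factors through $\psi^{-1}f$, and therefore $y = U\iota(y_0) \in \im(U(\psi^{-1}f)) = \im(Uf)$. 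This conjugation argument is in the same spirit as the proof of \cref{thm:hilbr-supp-int}.

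I expect the surjectivity half of (i) to be the main point that genuinely requires an idea beyond the lemmas already in hand: injectivity and the rigidity of parallel maps on spaces of dimension $\ge\lambda$ are given, but producing, for an arbitrary $y \in UB$, an element of $UA$ mapping to it under $Uf$ forces one first to move the finite-dimensional ``origin'' of $y$ into $\im(f)$ by an automorphism and then to absorb that automorphism using (ii). The remaining verifications are routine, relying throughout on the facts that a finite-dimensional subspace of an infinite-dimensional Hilbert space has orthocomplement of full dimension, and that adjoining an $\ell^2(\lambda)$-summand does not decrease dimension and forces it to be at least $\lambda$.
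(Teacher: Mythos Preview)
Your proposal is correct and follows the same overall architecture as the paper: establish (ii), then (i), then build the natural isomorphism from the constant functor with value $U(\ell^2)$. Your proof of (ii) is essentially identical to the paper's. The one notable difference is your surjectivity argument for (i), which is more elaborate than necessary. Once (ii) is in hand, the paper simply observes that if $\dim(A) = \dim(B)$ then there exists an isometric \emph{isomorphism} $g : A \cong B$, and since $f, g$ are parallel, $Uf = Ug$ is automatically a bijection; the case $\dim(B) > \dim(A)$ then follows by writing $f$ as a directed colimit of its codomain restrictions to subspaces $B_0 \supseteq f(A)$ with $\dim(B_0) = \dim(A)$. So contrary to your expectation, surjectivity is not ``the main point that genuinely requires an idea beyond the lemmas already in hand'': it is an immediate consequence of (ii). Your argument, which moves a finite-dimensional support of $y$ into $f(A)$ via an automorphism of $B$ and then absorbs that automorphism using (ii), is valid but circuitous by comparison.
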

\begin{proof}
From \cref{Uconstant}, we know $U$ maps parallel pairs of morphisms between sufficiently high-dimensional Hilbert spaces $\ell^2(\lambda)$ to the same morphism in $\Set$.
For two arbitrary maps $f, g : A \rightrightarrows B \in \Hilbr$ between infinite-dimensional spaces, consider the square
\begin{equation*}
\begin{tikzcd}
A
    \dar[hook]
    \rar[shift left=1, "f"]
    \rar[shift right=1, "g"']
    &
B
    \dar[hook]
\\
A \oplus \ell^2(\lambda)
    \rar[shift left=1, "f \oplus \id"]
    \rar[shift right=1, "g \oplus \id"']
    &
B \oplus \ell^2(\lambda)
\end{tikzcd}
\end{equation*}
Then $U(f \oplus \id) = U(g \oplus \id)$, and the inclusion $B `-> B \oplus \ell^2(\lambda)$ is mapped to an injection by \cref{injection}, hence $Uf = Ug$.
So $U$ is constant on all parallel pairs of morphisms in $\Hilbr^{\ge\aleph_0}$.


This implies that $U$ maps every $f : A -> B \in \Hilbr$ with $\dim(A) \ge \aleph_0$ to a bijection in $\Set$.
To see this: if $\dim(B) = \dim(A)$, then there exists a linear isometric isomorphism $g : A \cong B$; thus $Uf = Ug$ is a bijection.
If $\dim(B) > \dim(A)$, as in the preceding proof, write $f$ as the directed colimit of its codomain restrictions to $B_0 \subseteq B$ of the same dimension as $A$.

We now define a natural isomorphism $\phi : U(\ell^2) \cong U|_{\Hilbr^{\ge\aleph_0}}$, from the constant functor with value $U(\ell^2)$ to the restriction of $U$ to all infinite-dimensional Hilbert spaces $A$.
Let $\phi_A : U(\ell^2) -> UA$ be $Uf$ for any embedding $f : \ell^2 `-> A$.
This is a natural transformation, since for any morphism $g : A -> B \in \Hilbr$, the square
\begin{equation*}
\begin{tikzcd}
U(\ell^2) \dar["\phi_A"'] \rar["\id"] & U(\ell^2) \dar["\phi_B"] \\
UA \rar["Ug"] & UB
\end{tikzcd}
\end{equation*}
commutes, as both composites are $U$ applied to a morphism $\ell^2 -> B$, hence equal by the first part of this proof; and each $\phi_A$ is invertible by the second part of this proof.
\end{proof}

\begin{example}
\label{ex:hilbr-nonconstant}
The conclusion of \cref{thm:hilbr-Uconstant} cannot be strengthened to assert that $U$ itself is essentially constant.
Indeed, let $\-{\#N}$ be the directed-complete linear order $\{0, 1, 2, \dotsc, \infty\}$, and note that we have a directed-colimit-preserving functor $\Hilbr -> \-{\#N}$ taking a finite-dimensional space to its dimension and all infinite-dimensional spaces to $\infty$.
Composing with any directed-colimit-preserving $\-{\#N} -> \Set$ (which is simply the left Kan extension of an arbitrary functor $\#N -> \Set$, e.g., the inclusion on finite ordinals, so taking $\infty |-> \#N$) yields a directed-colimit-preserving $U : \Hilbr -> \Set$, which need not be constant on finite-dimensional spaces.

Similar examples show that the preceding lemma is also false for finite-dimensional $A$, i.e., $U$ need not even preserve monomorphisms between finite-dimensional spaces: e.g., take $\#N -> \Set$ above to be the functor mapping $n$ to the quotient of $\#N$ identifying the first $n$ elements.
\end{example}

\section{Other categories}
\label{sec:misc}

\subsection{Hilbert spaces and linear contractions}

Recall that $\Hilb$ denotes the category of (real or complex) Hilbert spaces and linear \emph{contractions}.
Given a directed-colimit-preserving functor $U : \Hilb -> \Set$, by restricting $U$ to $\Hilbr$ and applying \cref{thm:hilbr-Uconstant}, we of course get that $U$ is essentially constant on all infinite-dimensional spaces as well.
But we can show more: in contrast to \cref{ex:hilbr-nonconstant},

\begin{corollary}
\label{thm:hilb-Uconstant}
Any directed-colimit-preserving functor $U : \Hilb -> \Set$ is essentially constant.
\end{corollary}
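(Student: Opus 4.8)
The plan is to reduce the statement about $\Hilb$ to the already-established result on $\Hilbr$ (\cref{thm:hilbr-Uconstant}) by using the extra flexibility that contractions provide, namely the scaling maps. The key observation is that in $\Hilb$, unlike in $\Hilbr$, the zero Hilbert space $0$ is not merely initial but also terminal: every linear contraction $A \to 0$ and $0 \to A$ exists and is unique. Moreover, for any Hilbert space $A$, the scaling-by-$t$ map $A \to A$, $v \mapsto tv$, is a contraction for $t \in [0,1]$, and at $t=0$ this map factors through $0$. First I would use this to show that $U$ applied to the zero map $0_A : A \to A$ (scaling by $0$) equals $U$ applied to the identity on $A$, at least on the image of $U$ of high-dimensional inclusions, thereby collapsing everything to a single value.

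More concretely, here is the sequence of steps. First, by \cref{thm:hilbr-Uconstant}, the restriction $U|_{\Hilbr^{\ge\aleph_0}}$ is naturally isomorphic to the constant functor with value $U(\ell^2)$; call the common value set $S := U(\ell^2)$, and note every infinite-dimensional $A$ has $UA \cong S$ canonically via $\phi_A$. Second, I would show that for a \emph{finite}-dimensional space $A$, $UA$ is also canonically identified with $S$: consider the inclusion $j : A \hookrightarrow A \oplus \ell^2$ and the contraction $p : A \oplus \ell^2 \to A \oplus \ell^2$ that is the identity on $A$ and zero on $\ell^2$; since $A\oplus\ell^2$ is infinite-dimensional, $\Hilbr$-automorphisms of it act transitively enough, but the real point is that $p \circ j = j$ as maps $A \to A \oplus \ell^2$, while $p$ itself, being $\id_A \oplus 0$, has image $A$. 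Using that $U$ preserves the directed colimit expressing $A \oplus \ell^2$ as a colimit of its finite-dimensional subspaces, and that $U$ of the inclusion $A \hookrightarrow A\oplus\ell^2$ is injective (an analogue of \cref{injection} obtained by the scaling trick — this is where the contraction maps do work not available in $\Hilbr$), one deduces $Uj$ is a bijection onto $U(A\oplus\ell^2) \cong S$. Third, with every $UA$ identified with $S$, I would check that for \emph{any} contraction $f : A \to B$ the induced map $Uf : S \to S$ is the identity: embed the square $f : A \to B$ into $f \oplus \id : A \oplus \ell^2 \to B \oplus \ell^2$ between infinite-dimensional spaces as in the proof of \cref{thm:hilbr-Uconstant}, and invoke the fact that $U$ is constant on parallel pairs there — but now one must also handle that $f\oplus\id$ need not be an isometry. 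Here I would factor $f \oplus \id$ through a genuine isometry after rescaling: since $f$ is a contraction, $\|f v\| \le \|v\|$, and one can write $f$ as a composite of an isometry into a larger space followed by a contraction that is a "partial scaling", and argue each piece is sent by $U$ to the identity, using the $\Hilbr$ result for the isometric piece and the zero-map collapse for the scaling piece.

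The main obstacle I expect is the scaling/contraction piece: showing $U$ sends a strict contraction (in particular, scaling by $t < 1$, or the truncation $\id_A \oplus 0$) between infinite-dimensional spaces to the identity on $S$. The difficulty is that such maps are not isomorphisms and not isometries, so none of the earlier lemmas apply directly. The key idea to push through is that scaling by $t \in (0,1]$ on an infinite-dimensional $A$ factors as a directed colimit — or better, that the colimit of the sequence $A \xrightarrow{t} A \xrightarrow{t} A \to \cdots$ (which, per the paper's own footnote, fails to exist in $\Hilbm$ but behaves well here because $\Hilb$ has a zero object absorbing the scaling in the limit) forces $U$ of scaling-by-$t$ to be invertible with the right inverse, hence the identity once we know $UA = S$ and $U$ kills all the ambiguity. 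Concretely, scaling by $t$ composed with scaling by $1/t$ would be the identity if $1/t$ were allowed; instead I would use that scaling by $t$ on $A$ is conjugate, via an isometric embedding $A \hookrightarrow A \oplus A \oplus \cdots$, to a "shift-like" map whose $U$-image can be computed from the infinite-dimensional isometry case. Once scaling-by-$t$ and the truncation $\id_A\oplus 0$ are handled, an arbitrary contraction $f$ decomposes (e.g. via its factorization $A \to A \oplus B \to B$ through the graph-type embedding composed with a truncation-and-scaling) into pieces each already known to go to the identity, completing the proof that $U$ is naturally isomorphic to the constant functor with value $S$ on \emph{all} of $\Hilb$, finite-dimensional spaces included.
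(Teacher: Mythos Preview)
Your proposal has the right ingredients---the zero object, the $\Hilbr$ result, and the idea that $U(0_A)$ should equal $\id_{UA}$---but the execution has a genuine gap.  You correctly flag that the ``main obstacle'' is showing that $U$ sends truncations like $\id_A \oplus 0_{\ell^2}$ (and scalings) to the identity, but your proposed attacks on this are vague or circular.  For instance, to show that $Uj$ is a bijection for $j : A \hookrightarrow A \oplus \ell^2$, you would need $U(j \circ q) = \id$ where $q$ is the projection; but $j \circ q = \id_A \oplus 0_{\ell^2}$ is exactly the truncation you have not yet handled.  The ``shift-like conjugation'' and the directed-colimit-of-scalings arguments are not made precise, and it is not clear either one actually yields $U(s_t) = \id$ rather than merely some compatibility relation.

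What you are missing is that one can prove $U(0_A) = \id_{UA}$ \emph{directly, for every $A$}, in one line, without first establishing $UA \cong S$ and without ever analyzing scalings or truncations on their own.  The paper's trick: write $0_A = p \circ j \circ i$ and $\id_A = p \circ k \circ i$, where $i : A \hookrightarrow A \oplus \ell^2$ is the first-coordinate inclusion, $p : A \oplus A \oplus \ell^2 \to A$ is the second-coordinate projection, and $j, k : A \oplus \ell^2 \rightrightarrows A \oplus A \oplus \ell^2$ are the (first,\,third)- and (second,\,third)-coordinate inclusions respectively.  Both $j$ and $k$ are linear isometric embeddings between \emph{infinite}-dimensional spaces (regardless of $\dim A$), so $Uj = Uk$ by \cref{thm:hilbr-Uconstant}, whence $U(0_A) = Up \circ Uj \circ Ui = Up \circ Uk \circ Ui = U(\id_A) = \id_{UA}$.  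Since $0_A$ factors through the zero object, the natural transformations $\id_{\Hilb} \Rightarrow 0 \Rightarrow \id_{\Hilb}$ given by the zero maps, once whiskered by $U$, exhibit $U$ as naturally isomorphic to the constant functor with value $U(0)$.  This bypasses the entire scaling analysis you were planning.
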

\begin{proof}
For any $A \in \Hilb$, let
\begin{alignat*}{3}
i &:& A &-> A \oplus \ell^2
    &&= \text{first coordinate injection}, \\
j &:& A \oplus \ell^2 &-> A \oplus A \oplus \ell^2
    &&= \text{first and third coordinate injection}, \\
k &:& A \oplus \ell^2 &-> A \oplus A \oplus \ell^2
    &&= \text{second and third coordinate injection}, \\
p &:&\; A \oplus A \oplus \ell^2 &-> A
    &&= \text{second coordinate projection};
\end{alignat*}
then
\begin{equation*}
\begin{aligned}
U(0_A)
= U(pji)
&= Up \circ Uj \circ Ui \\
&= Up \circ Uk \circ Ui
    \quad \text{by \cref{thm:hilbr-Uconstant}} \\
&= U(\id_A)
= \id_{UA},
\end{aligned}
\end{equation*}
i.e., $U$ takes the zero map on every $A \in \Hilb$ to the identity.
Now whisker the natural transformations
\begin{equation*}
\begin{tikzcd}
\Hilb \rar[bend left, "\id_\Hilb"] \rar[bend right, "0"'] \rar[phantom, "{\scriptstyle 0}{\Downarrow}{\Uparrow} {\scriptstyle 0}"] &
\Hilb
\end{tikzcd}
\end{equation*}
by $U$ to get that $U$ is naturally isomorphic to the constant functor with value $U(0)$.
\end{proof}

We also note that combining the argument of \cref{thm:hilbr-Uconstant} with \cite{LRV} easily yields the same essential constancy conclusion for the supercategory $\Hilbm$ considered in that paper.
(Here, unlike with $\Hilb$, the argument of \cref{ex:hilbr-nonconstant} shows that we cannot conclude anything for finite-dimensional spaces.)

\begin{corollary}
\label{thm:hilbm-Uconstant}
Any functor $U : \Hilbm -> \Set$ preserving all existing directed colimits in $\Hilbm$ is essentially constant when restricted to the full subcategory $\Hilbm^{\ge\aleph_0}$ of infinite-dimensional spaces.
\end{corollary}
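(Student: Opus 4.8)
The plan is to transcribe, essentially verbatim, the whole chain of results leading up to \cref{thm:hilbr-Uconstant} — with \emph{linear isometric embedding} replaced by \emph{injective contraction} wherever that makes sense — using two facts imported from \cite{LRV}. As recalled in the introduction, the argument of \cite{LRV} goes through for any $U : \Hilbm -> \Set$ preserving all \emph{existing} directed colimits; from it we take: that there is an infinite cardinal $\lambda$ with $\abs{U(\ell^2(\lambda))} \le \lambda$ (as in the first lines of the proof of \cref{Uconstant-lambda}, following \cite[Theorem~18]{LRV}), and the $\Hilbm$-version of \cref{thm:hilbr-supp-int}, that the supports of an element are closed under intersection (following \cite[Lemma~14]{LRV}).

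The point requiring attention is that $\Hilbm$ lacks general directed colimits. However, every colimit invoked in the proofs leading to \cref{thm:hilbr-Uconstant} is of one benign shape: a directed family of subspace inclusions $A_i \subseteq A$ inside a fixed Hilbert space $A$, whose union is all of $A$. Such a family still has colimit $A$ in $\Hilbm$ — given a cocone $g_i : A_i -> B$ of injective contractions, every vector of $A$ already lies in some $A_i$, so the induced linear map $A -> B$ is well defined, and is again an injective contraction — hence $U$ preserves it. The three instances needed are: $A$ as the directed colimit of its finite-dimensional subspaces (every vector spans a line); $A$, when $\dim(A) \ge \lambda$, as the colimit of its $\lambda$-dimensional subspaces (every vector has countable support); and, for $f : A -> B$ with $\aleph_0 \le \dim(A) \le \dim(B)$, the morphism $f$ as the colimit of its codomain restrictions to the subspaces $B_0 \subseteq B$ of dimension $\dim(A)$ that contain $\im(f)$.

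Granting this, I would re-run the proofs in turn. The counting argument of \cref{Uconstant-lambda}, fed by the two imports and by the $\Hilbm$-version of \cref{thm:hilbr-supp} (which uses only the first colimit instance), shows that every $x \in UA$ with $\dim(A) = \lambda$ has trivial support, so $Uf = Ug$ for all parallel $f, g : A \rightrightarrows B$ with $\dim(A) = \lambda$; the argument of \cref{Uconstant} (via the second instance) extends this to every $A$ with $\dim(A) \ge \lambda$. The argument of \cref{injection}, applied to \emph{isometric} embeddings — which is all that is needed downstream — shows $U$ sends every such embedding between infinite-dimensional spaces to an injection: factor $x, y \in UA$ through a finite-dimensional subspace $A_0$, extend the resulting finite-dimensional isometric embedding to an isometric automorphism of $A$ when $\dim(A) = \dim(B)$, and reduce to that case via the third colimit instance otherwise. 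Now the square with $f \oplus \id, g \oplus \id : A \oplus \ell^2(\lambda) \rightrightarrows B \oplus \ell^2(\lambda)$, combined with injectivity of $U$ on the inclusion $B `-> B \oplus \ell^2(\lambda)$, yields $Uf = Ug$ for \emph{all} parallel pairs in $\Hilbm^{\ge\aleph_0}$; this forces $U$ of every morphism between infinite-dimensional spaces to be a bijection; and finally $\phi_A := Uf$, for any isometric embedding $f : \ell^2 `-> A$, is a natural isomorphism from the constant functor at $U(\ell^2)$ to $U|_{\Hilbm^{\ge\aleph_0}}$.

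The real work, and the step I expect to be most delicate, is the verification in the second paragraph: one must be sure that the $\Hilbr$-argument never covertly uses a directed colimit absent from $\Hilbm$. The sole such use is the bound $\abs{U(\ell^2(\lambda))} \le \lambda$, which in the $\Hilbr$-proof of \cref{Uconstant-lambda} is obtained from the colimit over the \emph{finite} subsets $F \subseteq \lambda$ of the spaces $\ell^2(F)$ — a system whose cocone into $\ell^2(\lambda)$ is far from jointly surjective, so whose colimit genuinely differs in $\Hilbm$; this is precisely why that bound is imported from \cite{LRV} rather than reproved. Everything else is formal manipulation of the benign colimits above.
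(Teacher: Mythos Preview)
Your proposal is correct and follows essentially the same approach as the paper: import the support-intersection lemma from \cite[Lemma~14]{LRV} and the cardinality bound from \cite[Theorem~18]{LRV}, then rerun the proofs of \cref{Uconstant-lambda,Uconstant,injection,thm:hilbr-Uconstant} (invoking \cref{injection} only for isometric embeddings, which is all the final square argument needs). Your explicit audit of which directed colimits survive in $\Hilbm$---and your identification of the cardinality bound as the one step whose $\Hilbr$-proof does not transfer, since the system $(\ell^2(F))_{F\subseteq\lambda\text{ finite}}$ has merely dense rather than full union---is more careful than the paper's terse citation of \cite{LRV}, but the overall structure is identical.
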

\begin{proof}
By \cite[Lemma~14]{LRV}, there is a well-defined support $\supp_A(x) \subseteq A$ for any infinite-dimensional $A \in \Hilbm$ and $x \in UA$, which is a finite-dimensional subspace of $A$.
By the proofs of \cref{Uconstant-lambda,Uconstant}, it follows that $U$ is constant on all parallel pairs $f, g : A \rightrightarrows B \in \Hilbm$ where $A$ is of sufficiently high dimension $\ge \lambda$ for some infinite cardinal $\lambda$.
By \cref{injection}, we still have that $U$ maps every morphism in the subcategory $\Hilbr^{\ge\aleph_0} \subseteq \Hilbm^{\ge\aleph_0}$ to an injection.
The proof of \cref{thm:hilbr-Uconstant} (which only uses \cref{injection} for a linear isometric embedding $A `-> A \oplus \ell^2(\lambda)$) now applies.
\end{proof}

\subsection{Infinite-dimensional Hilbert spaces}

A somewhat strange aspect of \cref{thm:hilbr-Uconstant} is that the conclusion concerns only the restriction of $U$ to the subcategory $\Hilbr^{\ge\aleph_0}$ of infinite-dimensional Hilbert spaces, yet the definition of $U$ on finite-dimensional spaces was essential to its proof (to make sense of the notion of support).
Nonetheless, we may easily deduce the corresponding result for functors defined on infinite-dimensional spaces:

\begin{corollary}
\label{thm:hilbrk-Uconstant}
For any infinite cardinal $\kappa$, letting $\Hilbr^{\ge\kappa} \subseteq \Hilbr$ be the full subcategory of Hilbert spaces of dimension $\ge\kappa$, every directed-colimit-preserving functor $U : \Hilbr^{\ge\kappa} -> \Set$ is naturally isomorphic to a constant functor.
\end{corollary}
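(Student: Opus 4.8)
The plan is to deduce this from \cref{thm:hilbr-Uconstant} by a padding trick. The obstruction to simply rerunning the support-based proof of \cref{thm:hilbr-Uconstant} inside $\Hilbr^{\ge\kappa}$ is that this category has no finite-dimensional objects, so the notion of support (\cref{thm:hilbr-supp}) is unavailable; instead I would transfer the conclusion from a functor defined on all of $\Hilbr$. Let $P : \Hilbr -> \Hilbr^{\ge\kappa}$ be the functor $A |-> A \oplus \ell^2(\kappa)$, $f |-> f \oplus \id_{\ell^2(\kappa)}$. This is well defined (a direct sum of linear isometric embeddings is one), lands in $\Hilbr^{\ge\kappa}$ since $\dim(A \oplus \ell^2(\kappa)) \ge \kappa$, and preserves directed colimits, since these are computed in $\Hilbr$ as completed unions and $-\oplus\ell^2(\kappa)$ visibly commutes with this. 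Hence $\tilde U := U \circ P : \Hilbr -> \Set$ is directed-colimit-preserving, so \cref{thm:hilbr-Uconstant} gives that $\tilde U|_{\Hilbr^{\ge\aleph_0}}$ is naturally isomorphic to a constant functor; in particular, $\tilde U$ carries every morphism between infinite-dimensional spaces to a bijection.

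Next I would introduce a comparison natural transformation. For $A \in \Hilbr^{\ge\kappa}$, let $e_A : A `-> A \oplus \ell^2(\kappa)$ be the inclusion of the first summand, a morphism of $\Hilbr^{\ge\kappa}$, and set $\epsilon_A := U(e_A) : UA -> U(A \oplus \ell^2(\kappa)) = \tilde U A$. Then $\epsilon$ is natural, by the identity $(f \oplus \id_{\ell^2(\kappa)}) \circ e_A = e_B \circ f$ holding for every $f : A -> B$ in $\Hilbr^{\ge\kappa}$. The crux will be to show that every component $\epsilon_A$ is a bijection; granting this, $\epsilon$ is a natural isomorphism $U \cong \tilde U|_{\Hilbr^{\ge\kappa}}$, and the latter is naturally isomorphic to a constant functor (restrict the isomorphism from the previous step), so composing shows $U$ is naturally isomorphic to a constant functor.

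To show $\epsilon_A$ is a bijection, I would first treat $A$ of the form $B \oplus \ell^2(\kappa)$ with $B \in \Hilbr^{\ge\kappa}$. Here $e_{B \oplus \ell^2(\kappa)}$ and $P(e_B) = e_B \oplus \id_{\ell^2(\kappa)}$ are two linear isometric embeddings $B \oplus \ell^2(\kappa) `-> B \oplus \ell^2(\kappa) \oplus \ell^2(\kappa)$ differing by the isometric automorphism of the codomain that swaps the two copies of $\ell^2(\kappa)$; applying $U$ and using $\tilde U = U \circ P$, it follows that $\epsilon_{B \oplus \ell^2(\kappa)}$ equals a bijection composed with $\tilde U(e_B)$, which is itself a bijection since $e_B$ is a morphism between infinite-dimensional spaces. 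For an arbitrary $A \in \Hilbr^{\ge\kappa}$, note $\dim(A \oplus \ell^2(\kappa)) = \dim A$, so there is a linear isometric isomorphism $\theta : A \cong A \oplus \ell^2(\kappa)$; naturality of $\epsilon$ at $\theta$ gives $\tilde U(\theta) \circ \epsilon_A = \epsilon_{A \oplus \ell^2(\kappa)} \circ U(\theta)$, whose right-hand side is a bijection by the case just handled together with $U(\theta)$ being a bijection, and $\tilde U(\theta)$ is a bijection, so $\epsilon_A$ is a bijection.

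I expect the main obstacle to be conceptual rather than technical: the support machinery underlying \cref{thm:hilbr-Uconstant} genuinely cannot be set up on $\Hilbr^{\ge\kappa}$, so the real content is recognizing that the padding functor $P$ lets one borrow \cref{thm:hilbr-Uconstant} wholesale. Once $P$ and the comparison transformation $\epsilon$ are in place, the remainder is a routine chase with isomorphisms and bijections in $\Set$; the only points needing a word of care are that $P$ preserves directed colimits and that the two inclusions $e_{B\oplus\ell^2(\kappa)}$ and $P(e_B)$ differ by a codomain automorphism.
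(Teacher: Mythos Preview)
Your argument is correct, and the overall strategy---pad by a fixed $\kappa$-dimensional space to manufacture a functor on all of $\Hilbr$, then invoke \cref{thm:hilbr-Uconstant}---matches the paper's. The implementation differs, however. The paper pads via the tensor product, setting $V(A) := U(\ell^2(\kappa) \otimes A)$, and then argues in two steps that $U$ sends every morphism to a bijection (by composing into something of the form $\id_{\ell^2(\kappa)} \otimes g$) and every endomorphism to the identity (via a commuting square with $\id_{\ell^2(\kappa)} \otimes f$), before assembling the natural isomorphism at the end. You instead pad via the direct sum, $P(A) := A \oplus \ell^2(\kappa)$, and immediately build a comparison transformation $\epsilon_A = U(e_A) : UA \to \tilde U A$ from the first-summand inclusion, then verify its components are bijections directly. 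Your route is a bit more streamlined: it avoids tensor products entirely and produces the natural isomorphism $U \cong \tilde U|_{\Hilbr^{\ge\kappa}}$ in one shot, whereas the paper's tensor formulation makes the ``every morphism factors through a padded one'' step slightly more uniform but requires separate bookkeeping for bijections and endomorphism identities. Both arguments hinge on the same cardinal arithmetic ($\dim A \ge \kappa$ forces $\dim(A \oplus \ell^2(\kappa)) = \dim A$) and the same appeal to \cref{thm:hilbr-Uconstant}.
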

\begin{proof}
Let $V : \Hilbr -> \Set$ take the trivial space to $\emptyset$ and every other space $A$ to $U(\ell^2(\kappa) \otimes A)$, where $\otimes$ denotes the Hilbert space tensor product (see \cite[E3.2.19]{pedersen1989analysis}), with the obvious extension to morphisms.
Then by \cref{thm:hilbr-Uconstant}, $V$ is essentially constant when restricted to $\Hilbr^{\ge\aleph_0}$.

In particular, $U$ takes every morphism $f : A -> B \in \Hilbr^{\ge\kappa}$ which is isomorphic to a morphism of the form $\id_{\ell^2(\kappa)} \otimes g$ for some $g : C -> D \in \Hilbr^{\ge\aleph_0}$ to a bijection $V(g)$; such $f$ are easily seen (by taking an orthonormal basis for $A$, then extending it to one for $B$, and using $\kappa \ge \aleph_0$) to consist of precisely all linear isometries which either are surjective or have $\ge\kappa$-codimensional image.
But every $f : A -> B \in \Hilbr^{\ge\kappa}$ may be composed with some such morphism (e.g., the injection $B `-> B \oplus \ell^2(\kappa)$) to yield another such morphism; thus, $U$ takes every morphism in $\Hilbr^{\ge\kappa}$ to a bijection.

Moreover, $U$ takes every endomorphism $f : A -> A$ which is conjugate to $\id_{\ell^2(\kappa)} \otimes g$ for some $g : C -> C \in \Hilbr^{\ge\aleph_0}$ to the identity.
But every $f : A -> A \in \Hilbr^{\ge\kappa}$ fits into a commutative square
\begin{equation*}
\begin{tikzcd}
\ell^2(\kappa) \otimes A \rar["\id_{\ell^2(\kappa)} \otimes f"] &[1em] \ell^2(\kappa) \otimes A \\
A \uar[hook, "i"] \rar["f"] & A \uar[hook, "i"']
\end{tikzcd}
\end{equation*}
where $i(\vec{v}) := \vec{e}_0 \otimes \vec{v}$;
thus $U$ takes every endomorphism in $\Hilbr^{\ge\kappa}$ to the identity.
As in the proof of \cref{thm:hilbr-Uconstant}, this easily implies that $U$ is constant on any parallel pair $f, g : A \rightrightarrows B \in \Hilbr^{\ge\kappa}$ (by assuming $\dim(A) = \dim(B)$ via codomain restriction and composing with some $h : B \cong A$), which together with the above yields a natural isomorphism $U(\ell^2(\kappa)) \cong U$.
\end{proof}

\subsection{Metric spaces and Banach spaces}

\begin{corollary}
\label{thm:metr-banr}
There is no faithful directed-colimit-preserving functor $U$ from the categories $\Metr$ of complete metric spaces (of bounded diameter $\le1$) and isometric embeddings or $\Banr$ of Banach spaces and linear isometric embeddings to $\Set$, or even from the full subcategories consisting of the non-locally compact spaces.
\end{corollary}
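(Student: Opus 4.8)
The plan is to reduce both non-existence statements to \cref{thm:hilbrk-Uconstant} with $\kappa = \aleph_0$, by exhibiting for each of $\Metr$ and $\Banr$ a \emph{faithful} directed-colimit-preserving functor $F$ from $\Hilbr^{\ge\aleph_0}$ into its full subcategory of non-locally-compact spaces. I would first note that it suffices to rule out faithful directed-colimit-preserving functors on these non-locally-compact subcategories (which is the stronger assertion anyway): directed colimits in $\Metr$ and in $\Banr$ exist and are computed as the completion $\overline{\bigcup_i X_i}$ of a directed union of subspaces, and the full subcategory of non-locally-compact spaces is closed under them. Indeed, each $X_i$ is complete, hence a closed subspace of $\overline{\bigcup_i X_i}$, so a closed ball around a point of $X_i$ that has no compact neighbourhood in $X_i$ still has none in $\overline{\bigcup_i X_i}$; for $\Banr$ this is just the fact that non-local-compactness means infinite-dimensionality by Riesz's lemma, which passes to directed unions. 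Consequently the restriction to the non-locally-compact part of any faithful directed-colimit-preserving functor on $\Metr$ (resp.\ $\Banr$) is again faithful and directed-colimit-preserving.

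For $\Banr$, take $F$ to be the inclusion of $\Hilbr^{\ge\aleph_0}$: every infinite-dimensional Hilbert space is a non-locally-compact Banach space, linear isometric embeddings of Hilbert spaces are linear isometric embeddings of Banach spaces, and a directed colimit of infinite-dimensional Hilbert spaces is computed the same way in $\Banr$, so $F$ preserves directed colimits; it is faithful, being an inclusion. For $\Metr$, let $F$ send a Hilbert space $H$ to the set $H$ equipped with the truncated metric $d'(x,y) := \min(\lVert x-y\rVert, 1)$ and each morphism to the same underlying function. One checks routinely that $d'$ is a metric of diameter $\le 1$ inducing the norm topology, so $(H,d')$ is complete and non-locally-compact; that linear isometric embeddings remain isometric for $d'$; and that $F$ is faithful, being the identity on underlying functions. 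The one point requiring care is that $F$ preserves directed colimits: the $\Metr$-colimit of the $(H_i,d')$ is the completion of $(\bigcup_i H_i, d')$, and since a sequence in $\bigcup_i H_i$ is $d'$-Cauchy iff it is norm-Cauchy (the Cauchy condition only involves arbitrarily small distances), this completion is $(\overline{\bigcup_i H_i}, d') = F(\colim_i H_i)$; that is, truncation commutes with completion. (One could just as well send $H$ to its closed unit ball with the induced metric.)

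With $F$ in hand, suppose $U$ were a faithful directed-colimit-preserving functor on the non-locally-compact part of $\Metr$ or of $\Banr$. Then $U \circ F : \Hilbr^{\ge\aleph_0} -> \Set$ is faithful, being a composite of faithful functors, and directed-colimit-preserving, hence naturally isomorphic to a constant functor by \cref{thm:hilbrk-Uconstant}. But a constant functor sends every morphism in a given hom-set to the same map, so it is faithful only if all hom-sets are singletons, which fails in $\Hilbr^{\ge\aleph_0}$ (e.g.\ $\ell^2$ has linear isometric automorphisms other than the identity) — a contradiction. I expect the only real obstacle to be the verification that the truncation functor $F$ preserves directed colimits, i.e.\ that replacing the norm metric by its $1$-truncation does not change the completion of a directed union, together with the bookkeeping confirming that the non-locally-compact subcategories are closed under directed colimits so that the initial reduction is legitimate; everything else is formal once \cref{thm:hilbrk-Uconstant} is available.
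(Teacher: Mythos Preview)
Your proposal is correct and follows essentially the same route as the paper: compose a hypothetical faithful $U$ with the forgetful/inclusion functor from infinite-dimensional Hilbert spaces into $\Banr$ or $\Metr$ (with truncation to diameter $\le 1$ for the latter), and derive a contradiction from \cref{thm:hilbrk-Uconstant}. The paper's proof is terser and treats the full-category and subcategory cases separately via \cref{thm:hilbr-Uconstant} and \cref{thm:hilbrk-Uconstant} respectively, whereas you streamline by reducing to the subcategory case first; the added verifications (closure of the non-locally-compact subcategory under directed colimits, and that truncation commutes with completion) are exactly the details the paper leaves implicit.
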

\begin{proof}
Follows from \cref{thm:hilbr-Uconstant} and the faithful forgetful functors $\Hilbr -> \Banr -> \Metr$.
For the last part, use \cref{thm:hilbrk-Uconstant}.
\end{proof}

This strengthens the results in \cite[\S4]{LRV} for the supercategories $\Metm, \Banm$.
However, we do not know if, as for $\Hilbr$ by \cref{thm:hilbr-Uconstant}, every directed-colimit-preserving functor from these categories to $\Set$ must be essentially constant (on non-locally compact spaces, say).

\bibliographystyle{amsalpha}
\def\MR#1{}
\bibliography{references}

\medskip\noindent
Ruiyuan Chen\\
Department of Mathematics\\
University of Florida\\
Gainesville, FL, USA\\
\nolinkurl{ruiyuan.chen@ufl.edu}

\medskip\noindent
Isabel Trindade\\
Institute for Logic, Language, and Computation\\
University of Amsterdam\\
Amsterdam, NL\\
\nolinkurl{isabel.trindade@student.uva.nl}

\end{document}